\theoremstyle{plain}
\newtheorem{thm}{Theorem}
\newtheorem{lem}{Lemma}[section]
\newtheorem{prop}{Proposition}[section]
\numberwithin{equation}{section}
\newcommand{\E}[1]{\mathbf{E}\left[#1\right]}
\newcommand{\pr}[1]{\mathbf{Pr}\left[#1\right]}
\newcommand{\ind}[1]{\mathbf{1}_{\{ #1 \}}}
\newcommand{\eps}{\varepsilon}
\newcommand{\Z}{\mathbb{Z}}
\newcommand{\R}{\mathbb{R}}
\newcommand{\Ai}{\mathrm{Ai}}
\newcommand{\A}{\mathcal{A}}
\newcommand{\h}{\mathfrak{h}}
\title{TASEP fluctuations with soft-shock initial data}
\author{Jeremy Quastel}
\address{Department of Mathematics, University of Toronto, \\ Toronto, ON, Canada.}
\email{quastel@math.toronto.edu}
\author{Mustazee Rahman}
\address{Department of Mathematics, Massachusetts Institute of Technology, \\ Cambridge, MA, USA \\
and \\
Department of Mathematics, Durham University, \\ Durham, UK.}
\email{mustazee@gmail.com}
\keywords{Burgers equation, exclusion process, KPZ universality, shock fluctuations, Tracy-Widom law}
\subjclass[2010]{Primary: 60K35; Secondary: 46N30, 82C22, 82C23.}
\begin{document}

\begin{abstract}
\normalsize

We consider the totally asymmetric simple exclusion process with
\emph{soft-shock} initial particle density, which is a step function increasing in the direction
of flow and the step size chosen small to admit KPZ scaling. The initial
configuration is deterministic and the dynamics create a shock.

We prove that the fluctuations of a particle at the macroscopic position of the shock converge
to the maximum of two independent GOE Tracy-Widom random variables, which establishes a conjecture
of Ferrari and Nejjar. Furthermore, we show the joint fluctuations of particles near the shock are determined
by the maximum of two lines described in terms of these two random variables.
The microscopic position of the shock is then seen to be their difference.

Our proofs rely on determinantal formulae and  a novel factorization of the associated kernels.
\end{abstract}

\maketitle
\newpage

\section{Introduction}

The continuous time totally asymmetric simple exclusion process (TASEP) is an
interacting particle system on an one-dimensional lattice. At time zero, there is a given
initial configuration of particles such that every site of $\Z$ occupies at most one particle.
The dynamics are as follows. A particle jumps randomly to its neighbouring site to the right
provided that it is empty. The jumps of a particle are independent of the others' and performed
with exponential waiting times having mean 1.

The observables in TASEP are the positions of particles. Initially, particles are numbered
by integers in increasing order from right to left with particle 1 being the first one to
the left of the origin. Denote by $X_t(n)$ the position of particle number $n$ at time $t$.
Thus, given the initial configuration $X_0(\cdot)$, particles are numbered such that
$$ \cdots < X_0(3) < X_0(2) < X_0(1) < 0 \leq X_0(0) < X_0(-1) < X_0(-2) < \cdots .$$
By convention, if there is a rightmost particle then $X_0(n) = + \infty$ for every $n$ after
that particle, and similarly, $X_0(n) = - \infty$ for every $n$ preceding a leftmost particle.
As an example, if the initial configuration occupies all sites at the negative integers then
$X_0(1) = -1, X_0(2) = -2, X_0(3) = -3$ and so on, while $X_0(0) = X_0(-1) = \cdots = + \infty$.

A detailed construction and basic features of TASEP are given in \cite{Lig}.
The model can also be seen as a randomly growing one-dimensional interface whose gradient
is the particle density. In this respect it belongs to the Kardar-Parisi-Zhang (KPZ) universality class.
The surveys \cite{BG, Q} discuss the relationship of TASEP to KPZ.

Despite its simplicity TASEP displays many of the interesting behaviour of non-equilibrium statistical mechanics.
Consider a deterministic initial configuration such that the macroscopic particle density is
$\rho_{-}$ to the left of the origin and $\rho_{+}$ to the right:
\begin{equation} \label{eqn:ID}
\rho_{\pm} = \lim_{t \to \, \infty}\; \frac{\# \, \{\text{particles in the interval} \; [0, \pm t]\;\text{at time}\;0\}}{t}\, .
\end{equation}
For instance, particles may be arranged periodically in large enough blocks to attain such a profile.
On the macroscopic scale the evolution of the particle density is a solution to Burgers' equation \cite{R, S3}.
Namely, for every $T > 0$ there is a density $u(T,x)$ such that
$$\int \limits_a^b u(T,x) \, dx = \lim_{t \to \, \infty}\; \frac{\# \, \{\text{particles in the interval} \; [at, bt]\;\text{at time}\; Tt\} }{t} \;\; \text{almost surely},$$
and $u$ is the unique entropy solution of Burgers' equation:
\begin{equation} \label{eqn:Burgers}
\partial_T u + \partial_x (u (1-u)) = 0, \quad u(0,x) = \rho_{-} \ind{x < 0} + \rho_{+} \ind{x \geq 0}.
\end{equation}

When $\rho_{-} < \rho_{+}$, there is a traffic jam in the system because particles to the left of the origin,
moving at macroscopic speed $1-\rho_{-}$, run into particles to the right of the origin moving at a slower
speed of $1-\rho_{+}$. In this case the relevant solution of \eqref{eqn:Burgers} is given by the travelling front
$$u(T,x) = u(0, x-\nu T), \;\; \text{where}\;\; \nu = 1- \rho_{-} - \rho_{+}\,.$$

The number $\nu$ is the speed of the traffic jam. This is the shock in Burgers' equation.
It is of interest to study the microscopic features of the shock, ergo, the fluctuations
of TASEP with an initial particle configuration as above.

\begin{figure}[t]
\begin{center}
\includegraphics[scale=0.45]{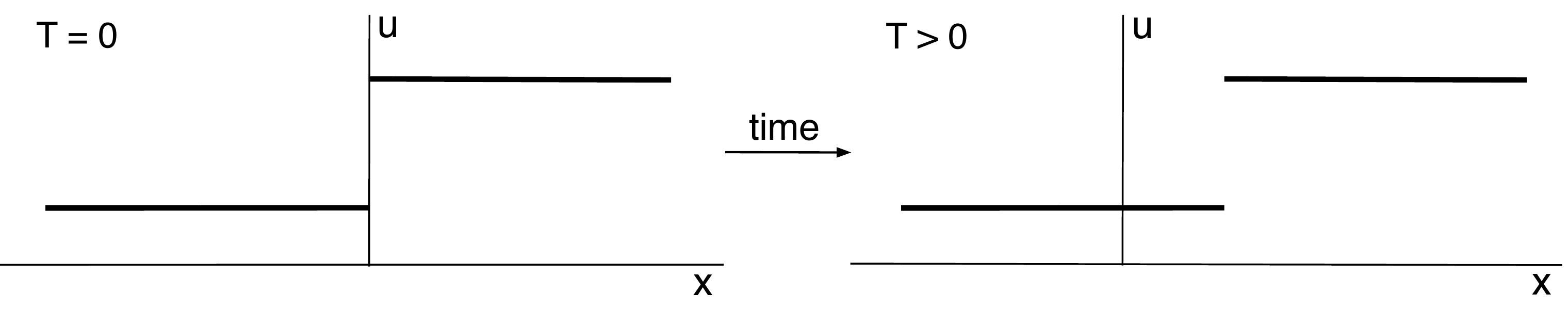}
\caption{The travelling shock-front $u(T,x)$ that solves \eqref{eqn:Burgers}.}
\end{center}
\end{figure}

A proxy for the location of the shock is the particle at macroscopic position $\nu t$.
For large times, the number\footnote{Rounding particle numbers to nearest integers
is omitted throughout the paper.} of said particle is
$$n^{\rm{shk}}_t = (\rho_{-} \rho_{+})t \,.$$
Its position fluctuates randomly to the order $t^{1/3}$ and one would like to calculate, for every $a \in \R$,
$$\lim_{t \to \, \infty}\, \pr{ X_t \big(n^{\rm{shk}}_t \big) \geq \nu t - a t^{1/3}}\,.$$

\subsection{The soft shock}
This paper considers a softening of the shock where the parameters $\rho_{\pm}$ are scaled as
\begin{equation} \label{eqn:softrho}
\rho_{\pm} = \frac{1 \pm \beta (t / 2)^{-1/3}}{2},\;\;  \beta \in \R \;\;\text{and}\;\; t \geq 2|\beta|^{3}.
\end{equation}
In the soft shock scenario, TASEP is run until time $t$ with the choice of $\rho_{\pm}$ as in \eqref{eqn:softrho}
and $t$ being the parameter within $\rho_{\pm}$. One then considers the law of $X(n^{\rm{shk}}_t)$
in the double limit as $t \to \infty$ followed by $\beta \to \infty$, in order to transition into the shock.

\begin{thm} \label{thm:1}
Consider TASEP with a deterministic initial configuration of particles having macroscopic density
as in \eqref{eqn:ID} and $\rho_{\pm}$ scaled as in \eqref{eqn:softrho}. Then,
$$\lim_{\beta \to \infty}\; \lim_{t \to \infty}\, \pr{X_t \big(n^{\rm{shk}}_t \big) \geq - a t^{1/3}} = F_1(2a)^2,$$
where $F_1$ is the distribution function of the GOE Tracy-Widom law.
\end{thm}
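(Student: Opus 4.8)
\emph{Proof strategy.}
The plan is to start from an exact determinantal formula for the law of a single TASEP particle and to run a steepest-descent analysis on a suitably factorized form of the kernel. Recall first the duality $\{X_t(n)\ge m\}=\{J_t(m)\ge n-N_0([m,0))\}$ for $m\le 0$, where $J_t(m)$ is the current across the bond $(m-1,m)$; since $2J_t(m)$ is the increment of the growth interface over the site $m-1$, the event in the theorem is an event about the height of the interface over a site at macroscopic distance $O(t^{1/3})$ from the shock, which sits at $\nu t=0$. Using the determinantal transition probabilities of Sch\"utz (equivalently the biorthogonal ensemble of Borodin--Ferrari--Pr\"ahofer--Sasamoto, or the operator formulation of Matetski--Quastel--Remenik), I would write $\pr{X_t(n^{\mathrm{shk}}_t)\ge -at^{1/3}}=\det(I-K_t)$ as a Fredholm determinant on $L^2$ of a half-line, where $K_t$ is assembled from the TASEP random-walk kernel and an operator $\mathcal H_t$ encoding the deterministic initial configuration (a hitting operator against the initial height profile). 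Substituting $\rho_\pm=\tfrac12(1\pm\beta(t/2)^{-1/3})$ and rescaling positions by $(t/2)^{2/3}$ and heights by $(t/2)^{1/3}$, one is left to compute the $t\to\infty$ limit of a rescaled kernel $K_t^{(\beta,a)}$.

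\emph{Factorization.}
The crux is that the rescaled initial height profile is the tent $\h_0(\zeta)=-\beta|\zeta|$, the minimum of the two lines $\zeta\mapsto\beta\zeta$ and $\zeta\mapsto-\beta\zeta$. Accordingly the hitting operator $\mathcal H_t$ splits into a contribution from the left ray, one from the right ray, and coupling terms, and the technical heart of the argument is an \emph{exact factorization identity for $K_t^{(\beta,a)}$} that exhibits the two half-line hitting problems as separate factors. A steepest-descent analysis of the defining double contour integral should then show that each factor converges in trace norm to a shifted Airy$_1$ (GOE) kernel -- the left ray yielding a GOE Tracy--Widom variable centered at a level affine in $\beta$ and $a$, and similarly the right ray -- while the coupling terms converge to the cross-kernel linking them. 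This produces, for each fixed $\beta$, a limiting determinant $\det(I-K^{(\beta,a)})$: the distribution function, at $0$, of the maximum of two affine-in-$\beta$ functions perturbed by two (correlated) GOE fields, which is the ``two lines'' picture of the abstract. Along the way the $\beta^2t^{1/3}$ corrections concealed in $n^{\mathrm{shk}}_t=\rho_-\rho_+t$ and in the macroscopic interface height at the shock must be tracked and shown to cancel, so that the argument of the limiting law comes out to be exactly $2a$.

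\emph{The iterated limit.}
It remains to send $\beta\to\infty$ in $\det(I-K^{(\beta,a)})$. Geometrically the optimizers for the two rays separate at scale $\sim\beta$, so the off-block (coupling) part of $K^{(\beta,a)}$ tends to zero and the two GOE fields decorrelate; since both rays carry the density-$\tfrac12$ scaling, the two limiting variables become i.i.d.\ GOE Tracy--Widom at argument $2a$, whence $\det(I-K^{(\beta,a)})\to F_1(2a)\cdot F_1(2a)=F_1(2a)^2$. To license the interchange of limits one needs uniform-in-$\beta$ trace-norm estimates on $K^{(\beta,a)}$ -- Gaussian-type decay of the rescaled kernels on the relevant half-lines -- so that the Fredholm expansion converges uniformly and the off-block entries can be sent to zero term by term. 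I expect the main obstacle to be precisely this factorization together with its steepest-descent asymptotics: extracting \emph{two} GOE saddles, rather than a single GUE saddle, from the kernel attached to the tent-shaped initial data, managing the contour deformations and coupling terms simultaneously, and doing so uniformly enough in $\beta$ to justify the second limit. The one-point statement above is the base case; the joint statement near the shock should follow by running the same factorization for the multi-level (extended) kernel.
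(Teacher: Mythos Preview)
Your overall architecture---first take $t\to\infty$ via determinantal/biorthogonal formulas to reach a limiting one-point kernel, then send $\beta\to\infty$---matches the paper, and the first limit is indeed handled essentially as you describe (the paper invokes the MQR/FN2 kernels and does not repeat the steepest descent). The gap is in your mechanism for the second limit.

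You propose an additive block decomposition of the hitting operator into a left-ray piece, a right-ray piece, and coupling terms, with the coupling block vanishing as $\beta\to\infty$. This is the natural first attempt, and it is precisely what prior work (Ferrari--Nejjar, who posed the conjecture) could not complete. The obstruction, which the paper makes explicit, is that once the limiting kernel is written so that one GOE block is visible, the other block carries a conjugation by $e^{\pm 2\beta u}$ which does \emph{not} converge as $\beta\to\infty$; the ``off-block tends to zero'' heuristic does not by itself get around this. The paper's innovation is a different factorization, not additive but multiplicative:
\[
I - e^{\beta^2\partial}K_\beta e^{-\beta^2\partial}
= \bigl(I - M_\beta(A+E_\beta)M_{-\beta}\bigr)^{*}\bigl(I - M_\beta(A+E_\beta)M_{-\beta}\bigr),
\]
where $A=S^{*}RS$ is the GOE kernel, $M_\beta$ is multiplication by $e^{\beta u}$, and $E_\beta\to 0$ in trace norm. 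This uses the left/right symmetry of the tent (both half-operators $S^{\mathrm{hypo}}$ coincide, computed via a reflection-principle calculation) so that the two factors are adjoints of one another. One then inserts $I=\chi_a+\bar\chi_a$ between the adjoint factors on $L^2(a,\infty)$: the $\chi_a$ piece gives $\det(I-A-E_\beta)^2$ after conjugating $M_{\pm\beta}$ out of each factor \emph{separately}, and the $\bar\chi_a$ piece is shown to be negligible by a balancing estimate in which $\|\chi_a M_{-2\beta}\|_{\mathrm{tr}}\sim (2\beta)^{-1}e^{-2\beta a}$ is played against $\|M_\beta\bar\chi_a A\chi_a\|_{\mathrm{op}}^2\lesssim e^{2\beta a}$. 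Your proposal does not identify this product structure or this cancellation, and without it the $\beta\to\infty$ step does not close. (A small side remark: no interchange of limits is needed---the theorem is an iterated limit taken in the stated order.)
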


The soft shock is introduced in \cite{FN2} and Theorem \ref{thm:1} proves a conjecture there.
The scaling \eqref{eqn:softrho} is considered ``critical" for TASEP since the resulting particle
fluctuations belong to the KPZ universality class with regards to their scaling exponents and limit laws.
Interestingly, as remarked in Section \ref{sec:shockremarks}, the large time limit of the soft shock
recovers many of the universal Airy processes ($\rm{Airy}_1$, $\rm{Airy}_2$, etc.) through various
limit transitions of the parameter $\beta$. Also, the rate of convergence that we
find in Theorem \ref{thm:1} is of order $\beta^{-1}$, although this is not optimized.

The advantage of the soft shock is that it allows to transition into the hard shock
by means of exact calculation of statistical laws. More precisely, one can describe the
limiting law of $X_t(n^{\rm{shk}}_t)$ as $t \to \infty$ in terms of Fredholm determinants.
Indeed, Theorem \ref{thm:2} provides the large $t$ limiting joint distribution of particles that
are in the window of the soft shock, and Theorem \ref{thm:3} establishes the large $\beta$ limit of that.
Together, they imply Theorem \ref{thm:1}. These methods should also apply to prove GOE Tracy-Widom
cubed, quadrupled, etc., limiting laws at the merger of shocks when the initial particle density has two
jumps, three jumps, and so on. We do not pursue it here.

\subsection{Large time limit of the TASEP with soft shock}
In the case of soft shock the particle numbered $n^{\rm{shk}}_t$ has non-trivial correlations with
other particles that are within a distance of order $t^{2/3}$ of its position.  Their positions fluctuate
on a scale of order $t^{1/3}$. As such, consider particles having numbers
$$n(t, x) = n^{\rm{shk}}_t - x(t/2)^{2/3} = \frac{t}{4} - \frac{\beta^2}{2}(t/2)^{1/3} - x(t/2)^{2/3} ,$$
for $x \in \R$, which at time $t$ have macroscopic positions
$$m(t,x) = \frac{x (t/2)^{2/3}}{\rho_{-}} = \frac{2x}{1-\beta(t/2)^{-1/3}}\, (t/2)^{2/3}.$$
The first limit transition derives the large $t$ limit of the process
\begin{equation} \label{eqn:firstlim} x \, \mapsto \, \frac{X_t(n(t,x)) - m(t,x)}{-(t/2)^{1/3}}\,.\end{equation}

\begin{thm} \label{thm:2}
Given real numbers $x_1 < x_2 < \cdots < x_m$ and $a_1, \ldots, a_m$, as $ t \to \infty$,
$$\pr{ X_t(n(t,x_i)) \geq m(t,x_i) - a_i(t/2)^{1/3},\; 1 \leq i \leq m}$$
converges to
$$\pr{ \h(1,x_i \,; \, 2\beta|y|) \leq \beta^2 - 2\beta x_i + a_i, \; 1 \leq i \leq m},$$
where $\h(1,x; 2\beta|y|)$ is a random function of the variable $x$.
The multi-point distribution functions of $\h(1,x; 2\beta |y|)$ are given
in terms of Fredholm determinants:
\begin{align*}
& \pr{\h(1,x_i \,;\, 2\beta|y|) \leq a_i,\; 1\leq i \leq m} = \\
& \quad \det \left ( I - e^{-x_m \partial^2}K_{\beta}e^{x_m \partial^2}
\big (I - e^{(x_1-x_m)\partial^2}\bar{\chi}_{a_1} e^{(x_2-x_1)\partial^2} \bar{\chi}_{a_2}
\cdots e^{(x_m-x_{m-1})\partial^2}\bar{\chi}_{a_m} \big)\right)_{L^2(\R)}
\end{align*}
where $\bar{\chi}_{a}(u) = \ind{u \leq a}$ is projection onto $L^2(-\infty, a)$ and
$K_{\beta}$ is an explicit operator.
\end{thm}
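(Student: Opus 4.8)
\emph{Proof strategy.} The plan is to begin from an exact determinantal formula for the joint law of TASEP particle positions at \emph{finite} time $t$, valid for an arbitrary deterministic initial configuration, and then carry out the KPZ scaling limit directly inside a Fredholm determinant. Such a finite-time formula expresses $\pr{X_t(n_i)\ge m_i,\ 1\le i\le m}$ as a Fredholm determinant whose (extended) kernel is built from a one-point kernel together with ``time-difference'' transition pieces, and in which the dependence on the initial configuration $X_0(\cdot)$ is recorded through a random-walk hitting operator --- a geometric-type walk reflected off the step profile $n\mapsto X_0(n)$. First I would rewrite the event $\{X_t(n(t,x_i))\ge m(t,x_i)-a_i(t/2)^{1/3}\}$ by means of the particle--height correspondence, so that it becomes a statement about the TASEP height function; Taylor expanding $n(t,x)$ and $m(t,x)$ with the scaling \eqref{eqn:softrho} about the base density $\tfrac12$ then produces the shift $\beta^2-2\beta x_i$ on the right-hand side (the $\beta^{2}$ from the $-\tfrac{\beta^{2}}{2}(t/2)^{1/3}$ term in $n(t,x)$ together with the law of large numbers for TASEP, the $-2\beta x_i$ from the cross term in the expansion of $m(t,x)$ and the $\beta$-tilt of the density). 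It then remains to show that, under the scaling in \eqref{eqn:firstlim}, the finite-time Fredholm determinant converges to the one in the statement.

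For the scaling limit I would argue as follows. Under space $\sim (t/2)^{2/3}$ and fluctuations $\sim (t/2)^{1/3}$ the deterministic initial profile, whose local slope near the origin is $2\rho_{\pm}-1=\pm\beta(t/2)^{-1/3}$, has rescaled height converging locally uniformly to the wedge $y\mapsto 2\beta|y|$; the microscopic details of the configuration are washed out at scale $(t/2)^{1/3}$, which is precisely why the limit depends only on the macroscopic density \eqref{eqn:ID}. Then: (a) a steepest-descent analysis of the double-contour representation of the one-point kernel, with critical point dictated by the base density $\tfrac12$ and contours shifted by the $\beta$-correction, gives convergence of the rescaled one-point kernel to an Airy-type kernel $K_{\beta}$ conjugated by heat semigroups $e^{\pm x\partial^{2}}$; (b) the rescaled hitting operator for the geometric walk reflected off the discrete profile converges to the corresponding operator for Brownian motion reflected off the wedge $2\beta|y|$, by an invariance-principle argument using that the relevant walk increments become asymptotically Gaussian with the correct variance; and (c) the time-difference pieces rescale to the conjugations $e^{(x_{i+1}-x_i)\partial^{2}}$ with the projections $\bar{\chi}_{a_i}$ inserted between consecutive ones. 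To upgrade these convergences to convergence of the Fredholm determinants I would prove uniform Gaussian/Airy decay estimates for the prelimit kernels by deforming the contours and bounding the integrands uniformly in $t$, which yields Hilbert--Schmidt (indeed trace-norm) convergence; the limit is then identified with $\pr{\h(1,x_i;2\beta|y|)\le a_i,\ 1\le i\le m}$ because the limiting determinant is exactly the Matetski--Quastel--Remenik formula for the KPZ fixed point at time $1$ started from the wedge $2\beta|y|$.

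The crux --- and where the announced factorization of the kernels enters, whether at the finite-time level or in the limit --- is the explicit evaluation appearing in step (b), that is, of the Brownian hitting operator off $2\beta|y|$ and hence of $K_{\beta}$. The difficulty is that the wedge is a \emph{maximum} of two lines, $2\beta|y|=\max(2\beta y,-2\beta y)$, whereas the KPZ evolution is of $\sup$-type, so one cannot merely superpose two half-line computations: the hit event ``the walk stays below the profile'' has to be split according to whether the last excursion above the wedge lies to the left or to the right of the seam $y=0$, and it is this splitting that presents $K_{\beta}$ as a structured combination of pieces attached to $y\ge 0$ and to $y\le 0$. Bringing this factorization to a usable closed form --- expressed through Airy functions shifted by $\beta$ --- is the heart of the matter; it is also exactly the structure that, in Theorem \ref{thm:3}, lets the two pieces decorrelate as $\beta\to\infty$, so that $\h(1,x;2\beta|y|)$ becomes the maximum of two lines governed by independent GOE Tracy--Widom variables and Theorem \ref{thm:1} follows. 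Once $K_{\beta}$ is in hand and the three convergences above are established with the necessary uniformity, re-expressing the limiting determinant through the conjugations $e^{x_m\partial^{2}}$ and the nested projections $\bar{\chi}_{a_i}$ reproduces the formula in the statement.
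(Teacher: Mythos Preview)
Your overall strategy---start from the finite-time determinantal formula for TASEP with general initial data, rewrite the particle event as a height-function event, check that the rescaled initial profile converges to the wedge $2\beta|y|$, and then pass to the limit inside the Fredholm determinant via steepest descent on the one-point kernel, an invariance principle for the hitting operator, and trace-class estimates---is exactly the machinery the paper invokes. The paper's own proof is in fact much terser than your outline: after the height-function rewriting and the observation $h^{\eps}(0,y)\to 2\beta|y|$, it simply cites \cite[Theorem~3.13]{MQR} (and the kernel computation of \cite{FN2}) for the entire scaling limit, so steps (a)--(c) of your plan are precisely what those references carry out.

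Where your proposal goes off track is the final paragraph. You describe the ``crux'' of Theorem~\ref{thm:2} as the explicit evaluation of the Brownian hitting operator off the wedge and the resulting factorization of $K_{\beta}$. This is a misreading of the logical structure. For Theorem~\ref{thm:2}, the operator $K_{\beta}$ is \emph{defined} abstractly as the hitting operator $K^{\mathrm{hypo}(2\beta|y|)}$ of \eqref{eqn:hitkernel}--\eqref{eqn:Khypo}; no closed form is required, and the convergence in step~(b) produces this abstract object directly. The reflection computation (Lemma~\ref{lem:Shypo}) and the factorization (Lemma~\ref{lem:factor}) are not part of the proof of Theorem~\ref{thm:2} at all---they belong entirely to the proofs of Theorems~\ref{thm:1} and~\ref{thm:3}, where one needs to analyze $K_{\beta}$ as $\beta\to\infty$. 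So your ``difficulty'' about splitting the hit event at the seam $y=0$ is real, but it is the heart of Section~\ref{sec:mainthm}, not of the first limit transition. Once you remove that paragraph, your sketch is a faithful (and more explicit) rendering of what the paper does by citation.
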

$K_{\beta}$ is defined separately in Section \ref{sec:softkernel} since its introduction
requires crucial terminology and concepts.

Complicated though the determinant in Theorem \ref{thm:2} may appear, observe the
one-point distribution functions of $\h(1,x; 2\beta |y|)$ are given by the
Fredholm determinant of operators $e^{-x\partial^2} K_{\beta} e^{x \partial^2}$
over the spaces $L^2(a,\infty)$. These will turn out simpler and play a crutial role in the proofs.

The reason we call the limit process $\h(1,x; 2\beta |y|)$ is that it is the height function at time $1$
of the KPZ fixed point with initial data $h_0(y) = 2\beta|y|$, as introduced in \cite{MQR}. 
The KPZ fixed point refers to what is expected to be the asymptotic scaling invariant Markov process
for the KPZ universality class. Although the KPZ fixed point motivates our paper to an extent,
the kernels in this case were actually known previously in \cite{FN2}, and so the results used from
\cite{MQR} are somewhat auxillary.

\subsection{Transition into the shock}
The main result of the paper is the large $\beta$ limit law of the process $\h(1,x; 2\beta|y|)$.
\begin{thm} \label{thm:3}
As $\beta \to \infty$, the process
$$ x \mapsto \, \h \big (1, (2\beta)^{-1} x \, ; \, 2 \beta |y| \big ) - \beta^2$$
converges in the sense of finite dimensional laws to the process
\begin{equation}\label{limitprocess}
x \mapsto \, \max \{ \, 2^{-2/3} X_{\rm{TW}_1} - x,\, 2^{-2/3} X'_{\rm{TW}_1} + x \, \},
\end{equation}
where $X_{\rm{TW}_1}$ and $X'_{\rm{TW}_1}$ are two independent GOE Tracy-Widom random variables.
\end{thm}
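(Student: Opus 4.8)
The plan is to work from the Fredholm determinant formula for the multi-point distributions of $\h(1,x;2\beta|y|)$ stated in Theorem \ref{thm:2}, apply the scaling $x \mapsto (2\beta)^{-1}x$ and the centering by $\beta^2$, and identify the $\beta \to \infty$ limit of the conjugated kernel $e^{-x_m\partial^2}K_\beta e^{x_m\partial^2}$. The target limit process in \eqref{limitprocess} is a maximum of two lines, so its finite-dimensional distributions have an explicit product structure: $\pr{\max\{2^{-2/3}X-x_i,\,2^{-2/3}X'+x_i\} \le a_i \text{ for all } i}$ factors, since the event is $\{2^{-2/3}X \le \min_i(a_i+x_i)\} \cap \{2^{-2/3}X' \le \min_i(a_i-x_i)\}$, which equals $F_1\big(2^{2/3}\min_i(a_i+x_i)\big)\,F_1\big(2^{2/3}\min_i(a_i-x_i)\big)$. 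So the analytic goal is to show the rescaled Fredholm determinant converges to a product of two GOE Tracy--Widom distribution functions evaluated at those two minima. Because the one-point function of $\h$ is governed by $e^{-x\partial^2}K_\beta e^{x\partial^2}$ on $L^2(a,\infty)$, the $m=1$ case should already reveal the mechanism: under the $(2\beta)^{-1}$ spatial scaling, the drift term $e^{\pm x\partial^2}$ contributes a shift that splits the relevant spectral mass of $K_\beta$ into two well-separated pieces.

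The heart of the argument is the \emph{factorization of the kernel} advertised in the abstract. I expect $K_\beta$, after conjugation and rescaling, to decompose (asymptotically) as a sum $K^- + K^+ + (\text{error})$, where $K^-$ and $K^+$ are, in the limit, the Airy-type kernels responsible for a single GOE Tracy--Widom law, one ``facing left'' and one ``facing right,'' living on asymptotically disjoint parts of $L^2(\R)$ once the $2\beta|y|$ initial data is viewed through the parabolic scaling. Concretely, the initial condition $h_0(y) = 2\beta|y|$ has two linear branches, and under the KPZ-fixed-point variational formula $\h(1,x;h_0) = \sup_y\{h_0(y) + \text{(Airy sheet term)}\}$ the supremum localizes near $y \approx +\infty$ on one branch and $y\approx-\infty$ on the other; the competition between these two is exactly the maximum of two lines, and the $\beta^2$ centering is the deterministic value of each branch's contribution. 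I would make this rigorous on the determinantal side: write $K_\beta$ using the explicit form from Section \ref{sec:softkernel}, insert the scaling, and show that the cross terms between the two branches vanish (they are separated by a distance growing in $\beta$, so the relevant kernel entries decay super-polynomially), while each diagonal block converges to a shifted Airy kernel whose Fredholm determinant on the appropriate half-line is $F_1$ of the shifted argument. The shift $\mp x$ in the limit comes from tracking how $e^{\mp (2\beta)^{-1}x\,\partial^2}$ acts after the parabolic change of variables, and the constant $2^{-2/3}$ is the standard normalization relating the Airy kernel's edge scaling to the Tracy--Widom law.

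The main obstacle, and where the real work lies, is establishing the two claims that underlie the splitting: (i) that the off-diagonal (cross-branch) part of the rescaled kernel is negligible in trace norm as $\beta \to \infty$ — this requires careful Gaussian/Airy decay estimates uniform in the spectral variables, using the explicit form of $K_\beta$ and the growing separation of the two branches; and (ii) that each diagonal block converges, in trace norm on the relevant subspace, to the GOE Tracy--Widom kernel with the correct shift — this is where the ``novel factorization'' must be invoked, presumably writing $K_\beta$ as a composition involving a projection and an Airy-like kernel so that the block structure is manifest. A secondary technical point is handling the multi-point case: the product $\bar\chi_{a_1}e^{(x_2-x_1)\partial^2}\bar\chi_{a_2}\cdots$ must be shown to collapse, in the limit, onto the single projection $\bar\chi_{\min_i(a_i \pm x_i)}$ on each branch — intuitively because on a straight line the constraint at all points reduces to the constraint at the binding point — which I would prove by a domination/monotonicity argument together with the same decay estimates. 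Assembling these, the rescaled Fredholm determinant factors into $\det(I-\text{Airy block}^-)\cdot\det(I-\text{Airy block}^+) \to F_1(\cdot)F_1(\cdot)$, giving the stated finite-dimensional convergence.
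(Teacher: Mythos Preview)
Your proposal diverges from the paper in two substantive ways, and one of them is a genuine gap.

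First, the mechanism of the factorization. You expect an \emph{additive} decomposition $K_\beta \approx K^- + K^+$ with the two pieces living on asymptotically disjoint parts of $L^2(\R)$, so that $\det(I-K^- - K^+)$ splits as a product. That is not what happens. The paper's Factorization Lemma is \emph{multiplicative}: after conjugating by the translation $e^{\beta^2\partial}$, one has
\[
I - e^{\beta^2\partial}K_\beta e^{-\beta^2\partial} \;=\; \big(I - M_\beta(A+E_\beta)M_{-\beta}\big)^{*}\big(I - M_\beta(A+E_\beta)M_{-\beta}\big),
\]
where $A = S^*RS$ is the GOE kernel and $E_\beta$ is a small error. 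The squared $F_1$ does not come from two kernels supported on separated regions; it comes from writing the determinant of this product as $\det(I-A-E_\beta)^2$ plus a cross term involving $(I-X)^*\bar\chi_a(I-X)$, and then showing (i) $\|E_\beta\|_{\rm tr}\to 0$ and (ii) the cross term vanishes because $\|\chi_a M_{-2\beta}\|_{\rm tr}\,\|M_\beta\bar\chi_a X\chi_a\|_{\rm op}^2 \to 0$. The conjugation by $M_{\pm\beta}$ is essential and cannot be removed from one factor without blowing up the other; this is precisely the obstacle noted in \S\ref{sec:overview}. Your additive picture would not produce this cancellation structure, and there is no natural spatial separation on $L^2(a,\infty)$ to exploit.

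Second, the multi-point case. The paper does \emph{not} attack the full path-integral determinant of Theorem~\ref{thm:2} directly. Instead it switches to the variational representation via an Airy sheet,
\[
\h(1,x;2\beta|y|) \;\overset{law}{=}\; \sup_{y}\,\{\A_2(x,y) - (x-y)^2 + 2\beta|y|\},
\]
splits the supremum at $y=0$ to get $\max\{X_1(x)-x,\,X_2(x)+x\}$, and uses the modulus of continuity of $\A_2$ to show $X_j((2\beta)^{-1}x) = X_j(0) + o_p(1)$. This reduces the \emph{entire} multi-point statement to the joint law of $(X_1(0),X_2(0))$, and a clever observation (Lemma~4.5) reduces that joint law to the one-point distribution of $\h(1,(2\beta)^{-1}x;2\beta|y|)-\beta^2$ at a suitably chosen $x$, which is exactly Proposition~4.1 (the one-point extension of Theorem~\ref{thm:1}). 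Your plan to ``collapse the product $\bar\chi_{a_1}e^{(x_2-x_1)\partial^2}\cdots$ onto $\bar\chi_{\min_i(a_i\pm x_i)}$ by domination/monotonicity'' is the step that would be genuinely difficult: after the $(2\beta)^{-1}$ rescaling the heat operators $e^{(2\beta)^{-1}(x_{j}-x_{j-1})\partial^2}$ tend to the identity, but they sit between projections at \emph{unshifted} levels $a_j$, and the shifts $\pm x_i$ you need only emerge after interaction with the $M_{\pm\beta}$ conjugation inside the factorization. Making this rigorous at the determinantal level would require controlling a product of $m$ operators inside the trace norm simultaneously with the multiplicative factorization, which the paper avoids entirely by going probabilistic.

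In short: your one-point sketch has the right target but the wrong mechanism (multiplicative, not additive, factorization), and your multi-point plan is where the real gap lies---the paper bypasses it with the Airy sheet variational formula rather than kernel analysis.
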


Stated in terms of the TASEP soft shock, Theorem \ref{thm:3} asserts that in the double limit
of $t \to \infty$ followed by $\beta \to \infty$ the process
$$x \, \mapsto \, \frac{X_t \big (n(t,(2\beta)^{-1}x) \big) - \beta^{-1}x(t/2)^{2/3}}{-(t/2)^{1/3}}$$
converges in law to the process \eqref{limitprocess}. Process \eqref{limitprocess} may be
thought of as the asymptotic ``shock process'' of TASEP with initial density \eqref{eqn:ID}.

\begin{figure}[htpb]
\begin{center}
\includegraphics[scale=0.6]{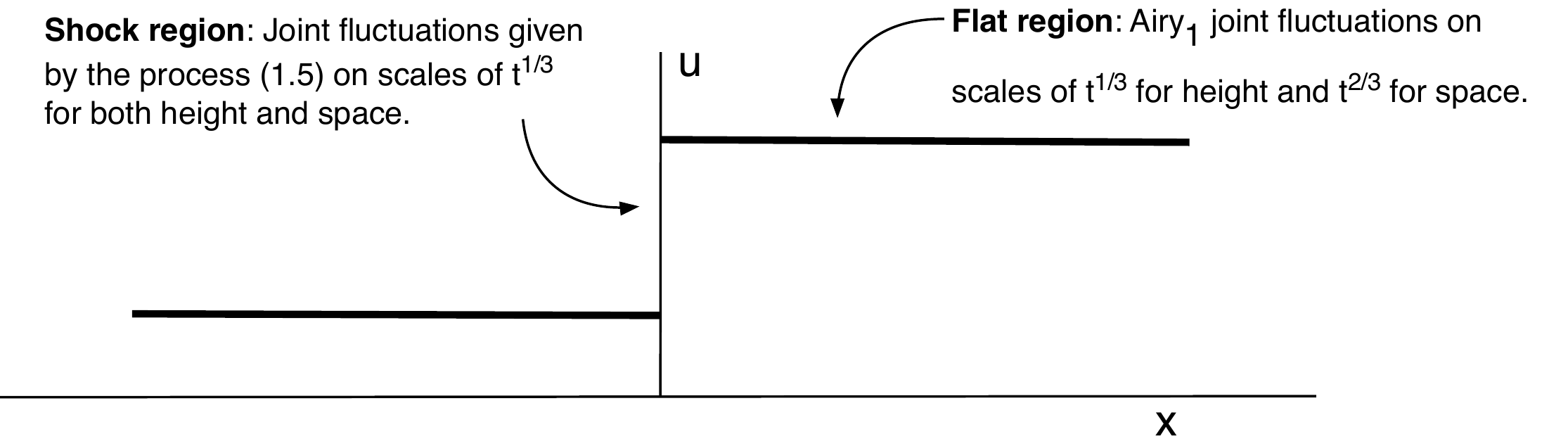}
\caption{Fluctuations of TASEP that arise from initial density \eqref{eqn:ID}
when $\rho_{\pm}$ are given by \eqref{eqn:softrho}.}
\label{fig:2}
\end{center}
\end{figure}

\paragraph{\textbf{A remark on interpretation of Theorem \ref{thm:3}}}
The process \eqref{limitprocess} may be expressed as $|x- X| + Y$ with
$X = (X_{\rm{TW}_1} - X'_{\rm{TW}_1})/2^{5/3}$ and $Y = (X_{\rm{TW}_1} + X'_{\rm{TW}_1})/2^{5/3}$.
As this process represents the asymptotic position of particles around the shock, the ``microscopic density"
near the shock may be interpreted as its derivative, which is simply an increasing step function with jump at
$X = (X_{\rm{TW}_1} - X'_{\rm{TW}_1})/2^{5/3}$. The \emph{microscopic position} of the shock is then
at $X$, and one finds the TASEP shock to be rigid in that the microscopic density remains a step with the randomness
only affecting its microscopic location.

\subsection{Remarks on the soft shock process} \label{sec:shockremarks}
``Soft shock" is bit of a misnomer since the shock manifests for large values of $\beta$ whereas
the process in Theorem \ref{thm:2} has interesting features for negative values of $\beta$ as well.
We have a family of processes interpolating from the $\rm{Airy}_2$ process at
$\beta = -\infty$ to the process \eqref{limitprocess} at $\beta = + \infty$.
This is easily seen from the framework of the aforementioned KPZ fixed point, as the
mapping from initial data $h_0$ to $\h(1,x; h_0)$ is continuous, so long as $h_0$ is upper semicontinuous
with values in $[-\infty, \infty)$ and bounded from above by a linear function.

When $\beta = 0$, $\h(1,x; 0)$ is the $\rm{Airy}_1$ process corresponding to flat initial data $h_0 \equiv 0$.
As $\beta \to - \infty$, the function $2\beta|y|$ converges to $- \infty \, \ind{y \neq 0}$, which is
called the narrow wedge or droplet ($-\infty \times 0 = 0$). Then $\h(1,x; 2\beta |y|)$ converges to
$\h(1,x; \, \text{narrow\;wedge})$, which has the law of the $\rm{Airy}_2$ process minus a parabola.
Its distribution at $x = 0$ is the GUE Tracy-Widom law.

The soft shock also interpolates between two $\rm{Airy}_1$ processes at $x = -\infty$ and $x = \infty$.
Indeed, affine and translation symmetries of the KPZ fixed point \cite[Theorem 4.5]{MQR} imply that for constants $c$ and $u$,
$$\h \big(1,x+u; h_0(y) \big) -c(x+u)-\frac{c^2}{4} \;\overset{law}{=}\;
\h \Big(1,x; \, h_0 \big (y+u+\frac{c}{2} \big) - c \big (y+u+\frac{c}{2} \big ) \Big).$$
Thus, $\h(1,x \pm L; \, 2\beta|y|) - \beta^2 \mp 2\beta(x \pm L)$ has the same law as $\h (1,x; \,4 \beta (x \pm (\beta+L))_{\mp})$.
The latter processes converge to $\h(1,x;\, \text{flat})$ as $L \to \infty$.

One can also find the $\rm{Airy}_{2 \to 1}$ process, which is the law of $\h \big (1,x; \, - \infty \cdot \ind{y < 0} \big )$.
The initial data is called half-flat. Indeed, $\h (1,x+\beta; \, 2\beta|y|) - \beta^2 + 2\beta(x+\beta)$ has the law of
$\h (1,x; \, 4\beta (y)_{-})$, and $4\beta (y)_{-}$ converges to the half-flat function as $\beta \to - \infty$.

\subsection{An overview of the proof} \label{sec:overview}
It is well known that the correlation functions of TASEP, which provide the probability of
particles being at specific sites, are determinantal; see for instance \cite{BFPS, BG} and references there.
Our proofs rely on such formulae. 

Let us summarize how the GOE TW-squared law arises in Theorem \ref{thm:1}.
The operator $K_{\beta}$ whose Fredholm determinant provides the law of $h(1,0;2\beta |y|)$ can be \emph{factorized} as
$$I-K_{\beta} = (I - M_{\beta} K_0M^{-1}_{\beta}) \cdot (I - M^{-1}_{\beta} K_0 M_{\beta}) + \rm{Err}_{\beta}.$$
$\rm{Err}_{\beta}$ is an error term that is vanishingly small in the appropriate trace norm
as $\beta \to \infty$. This effectively allows us to consider the Fredholm determinant of the
product. That approximately becomes a product of determinants, and then the conjugations by
$M_{\beta}$ can be removed. This results in the GOE TW-squared law in the large $\beta$ limit.

Observe that if one conjugates away $M_{\beta}$ from one of the factors in the above representation
then the other factor is conjugated by $M^2_{\beta}$, and the resulting operator,
$M^{\pm2}_{\beta}K_0 M^{\mp2}_{\beta}$, does not in fact converge as $\beta \to \infty$.
This was a challenge faced in previous works.

\subsection{Review of literature} \label{sec:lit}
The study of the TASEP shock has a history and the reader may find nice discussions in \cite{F2, FFV}
and their references. We provide an overview of prior works most directly related to ours.

In \cite{BFS} the authors find determinantal formulae for TASEP with particles having
varying speeds, which allows them to study shock fluctuations with Bernoulli-random initial
data. The fluctuations there are Gaussian to the order of $t^{1/2}$. The papers \cite{BC, CFP, FFV} have
related results for Bernoulli initial data. Deterministic shock-like initial data is studied in \cite{FN1, FN3}
by connecting TASEP to last passage percolation. The authors prove that shock fluctuations
for various setups are governed by the maximum of various Tracy-Widom random variables,
although they are unable to treat the step initial density \eqref{eqn:ID}.

The soft shock is introduced in \cite{FN2} in a setup where particles move at two different speeds
instead of being spread out with the two densities $\rho_{\pm}$. The authors prove the analogue of our 
Theorem \ref{thm:2}, and present determinantal kernels for the multi-point distributions
in terms of contour integrals. One may verify that their kernel matches ours. They also
conjecture our Theorem \ref{thm:1}. A beautiful illustration of the convergence of
$X_t(n^{\rm{shk}}_t)$ to the GOE TW-squared law is shown in \cite[Figure 1]{FN2}. The paper
\cite{N} also considers a scenario like the soft shock but with narrow-wedge-like initial data.

Finally, it is well known that for the general asymmetric simple exclusion process, when started
with Bernoulli-random initial data modelling the step density \eqref{eqn:ID}, a second class particle
from the origin follows the macroscopic shock for large times $t$ and has asymptotically Gaussian
fluctuations to the order $t^{1/2}$; see e.g.~\cite{Lig}. However, when \eqref{eqn:ID} is modelled
by deterministic initial data, \cite{FGN} proves that the asymptotic position of the second class particle
in TASEP is the difference of two independent GOE Tracy-Widom random variables on the scale of $t^{1/3}$.
One may think of the second class particle as a random walk in the potential well given by the
TASEP height process, and so it should sit at the minimum of process \eqref{limitprocess},
which is indeed $(X_{\rm{TW}_1}- X'_{\rm{TW}_1})/2^{5/3}$. Further discussions about shocks in ASEP
have appeared in \cite{BB, N2} after this paper.

\subsection*{Acknowledgements}
JQ was supported by the Natural Sciences and Engineering Research Council of Canada.
MR is thankful to Alexei Borodin for helpful discussions and guidance during early
stages of this work. The authors also thank Patrik Ferrari for a valuable discussion as well as
Ivan Corwin, Promit Ghosal and Daniel Remenik for their comments.

\section{The soft shock operator} \label{sec:softkernel}

The operator $K_{\beta}$ associated to the soft shock is defined in terms of operators
\begin{equation} \label{eqn:S}
S_x = \exp \{x \partial^2 + \frac{1}{3}\partial^3\}, \;\;\text{for}\; x \in \R,
\end{equation}
acting on functions $f \in L^2(a,\infty)$ for any fixed  $a >-\infty$.
(Recall the notation $e^L = \sum_{k \geq 0} \frac{L^k}{k!}$. For instance,
translation by $\lambda$ is $e^{\lambda \partial}$ since
$f(x + \lambda) = \sum_{k \geq 0} \frac{\partial^k f(x)}{k!} \lambda^k$;
its integral kernel is $e^{\lambda \partial}(u,v) = \delta_{u+\lambda,v}$.)

The operator $\exp \{x \partial^2\}$, which corresponds to the heat kernel, is ill-defined
for $x < 0$ but $S_x$ is well-defined due to the presence of the third derivative operator.
In terms of integral kernels,
\begin{equation} \label{eqn:Skernel}
S_x(u,v) = e^{\frac{2}{3}x^3 + x(v-u)} \Ai(v-u + x^2),
\end{equation}
where $\Ai(z)$ is the Airy function defined as
\begin{equation} \label{eqn:Aifunc}
\Ai(z) = \frac{1}{2 \pi \mathbf{i}} \, \oint_{\langle} dw \, e^{\frac{w^3}{3} - z w},
\end{equation}
and $\langle$ is a contour consisting of two rays going from $e^{-\mathbf{i} \pi/3} \infty$
to $e^{\mathbf{i} \pi/3} \infty$ through 0. (The Airy function also satisfies the
Airy equation $\Ai '' (x) = x \Ai(x)$ with $\Ai(x) \to 0$ as $x \to + \infty$.)
The operator $S_0$ will often be denoted $S$. We will use the fact that
$$ S^{*} S = S S^{*} = I.$$

We now introduce the important \emph{hitting operator}.
Let $B(y)$, for $y \geq 0$, denote a Brownian motion with diffusion coefficient 2.
Let $h : [0,\infty) \to [-\infty, \infty)$ be upper semicontinuous with at most linear growth
in the sense that $h(y) \leq C (1+ |y|)$ for some constant $C$. Let
$$ \tau = \inf \, \{ y \geq 0: B(y) \leq h(y)\}.$$
Define the operator $S^{\rm{hypo}(h)}_x$ in terms of its integral kernel as
\begin{equation} \label{eqn:hypokernel}
S^{\rm{hypo}(h)}_x(u,v) = \E{S_{x-\tau}(B(\tau),v)\ind{\tau < \infty} \,|\, B(0) = u}.
\end{equation}
If $u \leq h(0)$ then $S^{\rm{hypo}(h)}_x = S_x$. If $h$ is continuous and $u > h(0)$ then
$$ S^{\rm{hypo}(h)}_x(u,v) = \E{S_{x-\tau}(h(\tau),v)\ind{\tau < \infty} \,|\, B(0) = u}.$$
Consider also the projection operators onto $L^2(a,\infty)$ and $L^2(-\infty, a)$, respectively:
\begin{equation} \label{eqn:proj}
\chi_a(u,v) = \ind{u = v, \,u > a} \quad \text{and} \quad \bar{\chi}_a = 1 - \chi_a.
\end{equation}

The hitting operator is defined as follows.
Consider an upper semicontinuous $h : \R \to [-\infty, \infty)$ that has at most linear growth.
The hitting operator associated to $h$ requires choosing a \emph{split point} $x \in \R$.
Then consider the functions
$$ h^{\pm}_x(y) = h(x \pm y) \quad \text{for}\;\; y \geq 0.$$
The hitting operator is
\begin{equation} \label{eqn:hitkernel}
K^{\rm{hypo}(h)} = I - \left (S_x - S^{\rm{hypo}(h^{-}_x)}_x \right)^{*} \chi_{h(x)}
\left (S_{-x} - S^{\rm{hypo}(h^{+}_x)}_{-x}\right).
\end{equation}
It is a crucial property of the hitting operator that it does not depend on the choice
of split point $x$ (see \cite{QR1} for a proof).

The operator $K_{\beta}$ is the hitting operator associated to $h_{\beta}(y) = 2\beta |y|$.
It is natural (and crucial for the large $\beta$ asymptotics) to take the split point at $x = 0$,
which utilizes the fact that $h_{\beta}$ has different slopes on the two sides of the split point.
Denoting $h^{+}_{\beta}(y) = 2 \beta y$ for $y \geq 0$,
\begin{equation} \label{eqn:Khypo}
I - K_{\beta} = \left ( S - S^{\rm{hypo}(h^{+}_{\beta})}\right)^{*} \chi_0 
\left ( S - S^{\rm{hypo}(h^{+}_{\beta})}\right)\,.
\end{equation}
Since $S^{*} S = I$, $K^{\rm{hypo}(h_{\beta})}$ can be expressed as
$$S^{*} \bar{\chi}_0S \,+\, S^{*} \chi_0 S^{\rm{hypo}(h^{+}_{\beta})} \,+\, (S^{\rm{hypo}(h^{+}_{\beta})})^{*} \chi_0 S \,-\,
(S^{\rm{hypo}(h^{+}_{\beta})})^{*} \chi_0 S^{\rm{hypo}(h^{+}_{\beta})}.$$
Each of these terms have a presence of the operator $\exp \{ \pm \, \partial^3/3 \}$ on both sides.
This ensures that the operator $\exp\{ x \partial^2\}$ can be applied legally around $K^{\rm{hypo}(h_{\beta})}$
for every $x \in \R$, and so the operator inside the determinant from the statement of Theorem \ref{thm:2} is well-defined.

For general initial data $h_0$, the multi-point distribution functions of $\h(1,x; h_0)$ are given as follows.
Given $x_1 < \ldots < x_m$ and $a_1, \ldots, a_m$,
\begin{align} \label{eqn:KPZjoint}
& \pr{\h (1,x_i; h_0) \leq a_i\,;\, 1 \leq i \leq m} = \\
\nonumber & \det \left( I - e^{-x_m \partial^2} K^{\rm{hypo}(h_0)}e^{x_m \partial^2}
\big (I - e^{(x_1-x_m)\partial^2}\bar{\chi}_{a_1} e^{(x_2-x_1)\partial^2} \bar{\chi}_{a_2}
\cdots e^{(x_m-x_{m-1})\partial^2} \bar{\chi}_{a_m}\big)\right)_{L^2(\R)}.
\end{align}

The determinantal expression for the multi-point distribution function is the `path integral' version from \cite{MQR}.
There is an alternative `extended kernel' version. The hitting operator is also introduced
in \cite{QR1} in a modified form and precursors appear in \cite{BCR, CQR, PS, QR3}.

\section{First limit transition: proof of Theorem \ref{thm:2}} \label{sec:KPZlimit}

Let us introduce a parameter $\eps > 0$ and write
$$ t = 2 \eps^{-3/2}.$$
Then $m(t,x) = 2x \eps^{-1} + 2\beta x \eps^{-1/2} + O(\beta^2)$.
The events of interest are
$$ X_{2 \eps^{-3/2}}\left(\frac{1}{2}\eps^{-3/2} -x\eps^{-1} - \frac{\beta^2}{2} \eps^{-1/2}\right) \geq
2x \eps^{-1} + (2\beta x -a)\eps^{-1/2} + O(\beta^2).$$
In order to prove Theorem \ref{thm:2} one must derive the limiting joint probabilities of such events
as $\eps \to 0$. Upon replacing $x$ with $x - (\beta^2/2)\eps^{1/2}$ the event becomes
\begin{equation} \label{eqn:softevent}
X_{2 \eps^{-3/2}}\Big(\frac{1}{2}\eps^{-3/2} -x\eps^{-1}\Big)
\geq 2x \eps^{-1} - \big(\beta^2 - 2\beta x + a \big)\eps^{-1/2} + O(\beta^2).
\end{equation}

We may express the event \eqref{eqn:softevent} in terms of the height function of TASEP.
For TASEP with initial data $X_0$, let
$$X_t^{-1}(u) = \min\, \{n \in \Z: X_t(n) \leq u\}.$$
The height function $h_t: \Z \to \Z$ at time $t$ is
$$h_t(z) = -2 \big ( X_t^{-1}(z-1) - X_0^{-1}(-1) \big) - z.$$
The KPZ-rescaled height function is
\begin{equation} \label{eqn:KPZh}
h^{\eps}(T,x) = \eps^{1/2} \left [ h_{2T\eps^{-3/2}}( 2x\eps^{-1}) + T \eps^{-3/2} \right ].
\end{equation}
In terms of the KPZ-rescaled height function one has
$$h^{\eps}(T,x) \leq a \;\Longleftrightarrow\;
X_{2T\eps^{-3/2}}\Big(\frac{T}{2}\eps^{-3/2} - x\eps^{-1}\Big) \geq 2x\eps^{-1} - a \eps^{-1/2} + X_0(1).$$
In the $\eps \to 0$ limit the probability of the event in \eqref{eqn:softevent}
remains unaffected if the term $O(\beta^2)$ is ignored. Thus, one must show that the limiting
multi-point probabilities
$$ \lim_{\eps \to 0}\, \pr{ h^{\eps}(1,x_i) \leq \beta^2 - 2\beta x_i +a_i, \; 1\leq i \leq k}$$
are given by the formula from Theorem \ref{thm:2}.
(The function $h^{\eps}(0,y)$ converges uniformly to $h_0(y)= 2\beta |y|$.)

Here there are several approaches. In \cite{FN2}, a determinantal formula is derived for these multi-point
probabilities for the soft-shock data in a related setup, where particles to the left of the origin have a
different speed than those to the right. Using their formula, it is not difficult to guess a determinantal
formula for our setup and then check it using the the bi-orthogonalization procedure from \cite{BFPS, Sas}.
On the other hand, \cite[Theorem 2.6]{MQR} provides a formula for any initial data with a rightmost particle.
One can cutoff the soft-shock initial data far to the right and take a limit as the cutoff is removed to get a
determinantal formula for the multi-point probabilities which coincides with the guess. Then by direct
asymptotic analysis of the associated determinantal kernels one arrives at Theorem \ref{thm:2}.
This is done with generality in \cite[Theorem 3.13]{MQR} (``Convergence of TASEP").
Since the limiting kernel is the same as \cite{FN2}, we omit the details.

\section{Second limit transition: proofs of Theorem \ref{thm:1} and \ref{thm:3}} \label{sec:mainthm}

The proof of Theorem \ref{thm:1} is presented in Section \ref{sec:thm1} followed
by the proof of Theorem \ref{thm:3} in Section \ref{sec:thm3} since the latter builds on the former.
We first define the GOE Tracy-Widom law, introduced in \cite{TW}, in a suitable form.
For the remainder of the paper it is assumed that $\beta \geq 0$.

\paragraph{\textbf{The GOE Tracy-Widom law}}
The distribution function of the GOE Tracy-Widom law may be written as a Fredholm determinant \cite{FS}.
Consider the operator $A$ with integral kernel $A(u,v) = 2^{-1/3}\Ai\big(2^{-1/3}(u+v)\big)$.
If $R$ is the reflection operator:
\begin{equation} \label{eqn:R}
 Rf(x) = f(-x),
\end{equation}
then $A$ may be expressed as $A = R S^2 = S^{*}RS$.
This representation uses that $S^2 = e^{2\partial^3/3}$ and, as an integral kernel,
$$\exp \Big \{\frac{t}{3} \partial^3 \Big \}\,(u,v) = t^{-1/3} \Ai \big(t^{-1/3}(v-u)\big)\;\;\text{for}\; t > 0.$$
That $RS^2 = S^{*}RS$ is implied by the relation $\partial R = - R \partial$. It will turn out that $A$ is the
operator $K_{0}$. The GOE Tracy-Widom distribution function is
\begin{equation} \label{eqn:AGOE}
F_1\big(2^{2/3}a \big) = \pr{X_{\rm{TW}_1} \leq 2^{2/3} a} = \det (I - \chi_{a} \,A \, \chi_{a})_{L^2(\R)}\, .
\end{equation}

\subsection{Proof of Theorem \ref{thm:1}} \label{sec:thm1}

Let $M_{\beta}$ denote the multiplication operator:
$$M_{\beta}f(x) = e^{\beta x} f(x).$$
Note also the translation operator $f \mapsto f(x+\lambda)$ is given by $e^{\lambda \partial}$.

\begin{lem} \label{lem:commutator}
The following commutation relations hold between $M_{\beta}$, $S$, $R$ and the translation
operator.
\begin{enumerate}
\item $M_{\beta} S = \exp \big \{ \frac{1}{3}(\partial - \beta)^3 \big \} M_{\beta}$,
\item $M_{\beta} \exp \{ \lambda \partial^2 \} = \exp \{ \lambda (\partial - \beta)^2\} M_{\beta}$,
\item $M_{\beta} \exp \{ \lambda \partial \} = \exp \{ \lambda (\partial - \beta)\} M_{\beta}$,
\item $M_{\beta}R = R M_{-\beta}$ \;and\; $\exp \{\lambda \partial\} S = S \exp \{\lambda \partial \}$.
\end{enumerate}
\end{lem}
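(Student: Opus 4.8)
The strategy is to reduce everything to the single elementary identity $M_\beta \partial = (\partial - \beta) M_\beta$ as operators, and then propagate it through power series. To see the base identity, note $M_\beta \partial f(x) = e^{\beta x} f'(x)$ while $(\partial - \beta) M_\beta f(x) = \partial\big(e^{\beta x} f(x)\big) - \beta e^{\beta x} f(x) = e^{\beta x} f'(x) + \beta e^{\beta x} f(x) - \beta e^{\beta x} f(x) = e^{\beta x} f'(x)$, so indeed $M_\beta \partial = (\partial - \beta) M_\beta$, equivalently $M_\beta \partial M_\beta^{-1} = \partial - \beta$ on the relevant domain of smooth functions in $L^2(a,\infty)$. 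From this, conjugation by $M_\beta$ is a ring homomorphism on polynomials (and convergent power series) in $\partial$, so for any polynomial $P$ one has $M_\beta P(\partial) = P(\partial - \beta) M_\beta$. Applying this with $P(\partial) = \lambda\partial^2$, $P(\partial) = \lambda\partial$, and $P(\partial) = \lambda\partial + \tfrac13\partial^3$ (recalling $S = S_0 = \exp\{\tfrac13\partial^3\}$ from \eqref{eqn:S}) and then exponentiating gives parts (1), (2), (3) respectively; for instance $M_\beta \exp\{\lambda\partial^2\} = \exp\{\lambda(\partial-\beta)^2\} M_\beta$ because $M_\beta \big(\lambda\partial^2\big)^k M_\beta^{-1} = \big(\lambda(\partial-\beta)^2\big)^k$ term by term in the exponential series.

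For part (4), the identity $M_\beta R = R M_{-\beta}$ is checked directly on functions: $M_\beta R f(x) = e^{\beta x} f(-x)$, while $R M_{-\beta} f(x) = (M_{-\beta} f)(-x) = e^{-\beta(-x)} f(-x) = e^{\beta x} f(-x)$, so the two agree. The relation $\exp\{\lambda\partial\} S = S\exp\{\lambda\partial\}$ is immediate since $\partial$ commutes with itself, hence $\exp\{\lambda\partial\}$ commutes with $\exp\{\tfrac13\partial^3\} = S$; this is also visible at the level of kernels using $e^{\lambda\partial}(u,v) = \delta_{u+\lambda,v}$ and translation-invariance of $S(u,v)$ in \eqref{eqn:Skernel} (it depends on $u,v$ only through $v-u$). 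As a sanity check one may verify (1) against the kernel formula \eqref{eqn:Skernel}: $M_\beta S(u,v) = e^{\beta u} S(u,v) = e^{\beta u} \Ai(v-u)$, and the right side $\exp\{\tfrac13(\partial-\beta)^3\} M_\beta$ acting should reproduce this after shifting the Airy contour in \eqref{eqn:Aifunc} by $\beta$, consistent with the well-known identity relating $S_x$ to the Airy function with exponential tilts.

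The calculations here are all routine; the only genuine subtlety — and the step I would be most careful about — is domain/convergence bookkeeping. The operator $\exp\{x\partial^2\}$ is ill-defined for $x<0$ (as the paper emphasizes just after \eqref{eqn:S}), so parts (2) with $\lambda<0$ and the manipulations feeding into the well-definedness of $K_\beta$ must be read as formal identities that become rigorous only when flanked by the regularizing factor $\exp\{\pm\tfrac13\partial^3\}$ coming from $S$ and $S^{\mathrm{hypo}}$; the honest statement is that the identities hold on the dense subspace of functions of the form $S g$ (or more generally in the image of $S_x$ for appropriate $x$), which is exactly how they are used in the sequel. I would state the lemma's proof at the level of the generator identity $M_\beta\partial M_\beta^{-1} = \partial - \beta$, note that conjugation is multiplicative, and remark that all exponential series are to be interpreted in the same regularized sense already in force throughout Section \ref{sec:softkernel}.
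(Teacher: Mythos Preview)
Your proposal is correct and follows essentially the same approach as the paper: the paper's proof is a single sentence deriving (1)--(3) from the identity $\partial M_\beta = M_\beta(\partial+\beta)$ (equivalent to your $M_\beta\partial = (\partial-\beta)M_\beta$) and declaring (4) clear. Your version is just a more explicit elaboration of the same idea, with added commentary on domains and a kernel sanity check that the paper omits.
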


\begin{proof}
Relations (1) -- (3) follow from the identity $\partial M_{\beta} = M_{\beta} (\partial + \beta)$.
Relation (4) is clear.
\end{proof}

The following lemma is key to calculating the hitting operator associated to $h_{\beta}(y) = 2 \beta |y|$.
\begin{lem}[Reflection lemma] \label{lem:Shypo}
Let $h^{+}_{\beta}(y) = 2 \beta y$ for $y \geq 0$. Then the operator
$$S^{\rm{hypo}(h^{+}_{\beta})} = \chi_0 \big(M_{\beta} R M_{-\beta} \big) S + \bar{\chi}_0 S.$$
\end{lem}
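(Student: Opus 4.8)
The plan is to compute the kernel $S^{\mathrm{hypo}(h^+_\beta)}(u,v)$ directly from its probabilistic definition \eqref{eqn:hypokernel}, splitting according to whether the starting point $u$ lies below or above the line $h^+_\beta(0)=0$. When $u\le 0$, the definition gives immediately $S^{\mathrm{hypo}(h^+_\beta)}_{x} = S_x$ (the Brownian motion already starts in the hypograph), which accounts for the $\bar\chi_0 S$ term. So the substance is the case $u>0$, where, since $h^+_\beta$ is continuous, $\tau = \inf\{y\ge 0 : B(y)\le 2\beta y\}$ and
$$
S^{\mathrm{hypo}(h^+_\beta)}(u,v) = \mathbf{E}\left[S_{-\tau}(2\beta\tau, v)\,\mathbf{1}_{\{\tau<\infty\}}\,\middle|\, B(0)=u\right].
$$
First I would remove the drift: writing $B(y) = \tilde B(y) + 2\beta y$ for a drift-free Brownian motion $\tilde B$ with diffusion coefficient $2$ started at $u$, the stopping time becomes $\tau = \inf\{y\ge 0 : \tilde B(y)\le 0\}$, the first hitting time of $0$ by $\tilde B$, whose density is the classical one-sided stable-$\tfrac12$ law $p(y) = \frac{u}{\sqrt{4\pi}\,y^{3/2}} e^{-u^2/(4y)}$. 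On the event $\{\tau = y\}$ we have $B(\tau) = 2\beta y$, so
$$
S^{\mathrm{hypo}(h^+_\beta)}(u,v) = \int_0^\infty p(y)\, S_{-y}(2\beta y, v)\, dy.
$$

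Next I would insert the explicit kernel formula \eqref{eqn:Skernel}, namely $S_{-y}(2\beta y,v) = e^{-\frac{2}{3}y^3 - y(v-2\beta y)}\Ai(v-2\beta y + y^2)$, and use the contour-integral representation \eqref{eqn:Aifunc} of $\Ai$. The resulting double integral (over $y\in(0,\infty)$ and over the Airy contour in $w$) should, after completing the square in $y$, collapse: the $y$-integral is Gaussian-type and evaluates explicitly, leaving a single contour integral that I expect to recognize as $2\beta$-shifted Airy kernel entries. Concretely I anticipate the answer to come out as
$$
S^{\mathrm{hypo}(h^+_\beta)}(u,v) = e^{\beta(v - u)}\, e^{\frac{2}{3}\beta^3}\,\Ai\!\left(u + v + \beta^2\right)\quad(\text{up to checking constants}),
$$
which is exactly the kernel of $M_\beta R M_{-\beta} S$ restricted to $u>0$: indeed $S(u,v) = \Ai(v-u)$ (this is $S_0$), then $M_{-\beta}$ multiplies by $e^{-\beta u}$, then $R$ sends $u\mapsto -u$ inside the Airy argument producing $\Ai(v+u)$-type dependence, and $M_\beta$ multiplies by $e^{\beta u}$. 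So the final step is a bookkeeping check that the composition $\chi_0(M_\beta R M_{-\beta})S$ has precisely this integral kernel, using Lemma \ref{lem:commutator}(4) ($R\partial = -\partial R$) and the identity $e^{\frac{t}{3}\partial^3}(u,v) = t^{-1/3}\Ai(t^{-1/3}(v-u))$ to keep track of how $S$ and the conjugation interact.

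An alternative, possibly cleaner, route is to avoid the explicit Gaussian computation: observe that $v \mapsto S_{-y}(2\beta y, v)$ is, for each fixed $y$, a solution of the backward heat/Airy-type equation, and the map $h_0 \mapsto S^{\mathrm{hypo}(h_0)}$ is characterized (as in \cite{QR1}) as the unique solution to a boundary-value problem: $S^{\mathrm{hypo}(h^+_\beta)}_x(u,\cdot)$ agrees with $S_x(u,\cdot)$ for $u\le 0$ and is ``harmonic'' above the line with that boundary condition. One then verifies that $\chi_0(M_\beta R M_{-\beta})S + \bar\chi_0 S$ satisfies the same boundary-value problem — the reflection $M_\beta R M_{-\beta}$ being exactly the method-of-images solution for a half-line with a linear boundary of slope $2\beta$ (the drift shift $M_{\pm\beta}$ being the Cameron–Martin/Girsanov tilt, and $R$ the reflection across $0$). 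I expect \textbf{the main obstacle} to be this identification of the right ``method of images'' operator and checking the boundary matching at $u=0$ carefully — in particular verifying that the exponential prefactors $e^{\beta(v-u)}e^{\frac23\beta^3}$ produced by the drift removal are precisely reproduced by the conjugation $M_\beta(\cdot)M_{-\beta}$ together with the $\tfrac13\partial^3$ inside $S$, so that the two expressions agree as operators and not merely as kernels up to a multiplicative ambiguity. The continuity/semicontinuity hypotheses on $h$ needed to apply \eqref{eqn:hypokernel} are automatic here since $h^+_\beta$ is linear.
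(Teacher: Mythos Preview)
Your overall instinct --- reflection plus a Girsanov tilt --- is right, but the execution has two real gaps. First, when you write $B(y)=\tilde B(y)+2\beta y$, the process $\tilde B$ is \emph{not} drift-free: it is a Brownian motion with drift $-2\beta$, so its first passage to $0$ has the inverse-Gaussian density $q(y)=p(y)\,e^{\beta u-\beta^2 y}$, not the stable-$\tfrac12$ density $p(y)$ you wrote. Second, and more seriously, even after fixing the density the $y$-integral is not ``Gaussian-type'': the factor $S_{-y}(2\beta y,v)$ contributes $e^{-\frac{2}{3}y^3+2\beta y^2}$ times an Airy function whose argument is quadratic in $y$, so after inserting the contour representation the exponent in $y$ is genuinely cubic and no completion of the square collapses it. (Relatedly, your anticipated kernel $e^{\beta(v-u)}e^{\frac{2}{3}\beta^3}\Ai(u+v+\beta^2)$ is not the kernel of $M_\beta RM_{-\beta}S$; a direct check gives $(M_\beta RM_{-\beta}S)(u,v)=e^{2\beta u}\Ai(u+v)$, and the two disagree already at $u=0$.)

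The paper circumvents the cubic obstruction by inserting a finite horizon $T$: using the semigroup identity $S_{-\tau}=e^{(T-\tau)\partial^2}S_{-T}$ for $\tau\le T$, the expectation $\E{e^{(T-\tau)\partial^2}(2\beta\tau,\cdot)\ind{\tau\le T}}$ becomes the genuinely Gaussian ``hit-and-arrive'' density $P^{\mathrm{hit}}_T(u,\cdot)$, which is computed by Cameron--Martin plus the classical reflection principle. One then has $\chi_0 P^{\mathrm{hit}}_T S_{-T}=\chi_0 M_\beta RM_{-\beta}S+\text{(error)}$, and the remaining work is a contour argument showing the error term vanishes as $T\to\infty$. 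So the reflection happens at the level of the endpoint density $P^{\mathrm{hit}}_T$, not the hitting-time density; this is what makes the computation close. Your alternative ``method of images'' route is plausible in spirit but, as stated, lacks a precise characterization of the boundary-value problem that would pin down $S^{\mathrm{hypo}(h^+_\beta)}$ uniquely.
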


\begin{proof}
Recall that $S^{\rm{hypo}(h^{+}_{\beta})}(u,v) = S(u,v)$ if $u \leq h^{+}_{\beta}(0) = 0$.
This contributes the term $\bar{\chi}_0 S$. Now assume that $u > 0$ and let $\tau$ be the
hitting time of a Brownian motion of diffusion coefficient 2, starting from $u$,
to the hypograph of $h^{+}_{\beta}$.

Observe that $S_{-t}(2\beta t, v) = e^{-2t^3/3 -t(v-2\beta t)}\Ai(v-2\beta t + t^2)$.
Recall that the Airy function has the following decay: there is a constant $C$ such that
\begin{equation} \label{eqn:Airydecay}
 |\Ai(z)| \leq C\;\;\text{if}\; z \leq 0 \;\text{and}\; |\Ai(z)| \leq C e^{-\frac{2}{3} z^{3/2}} \; \text{if}\; z > 0.
\end{equation}
The above implies that $S_{-t}(2\beta t, v)$ decays sufficiently fast that one has
$$ S^{\rm{hypo}(h^{+}_{\beta})}(u,v) = \lim_{T \to \infty}\, \E{ S_{-\tau}(2 \beta \tau, v) \ind{\tau \leq T} \,|\, B(0) = u}.$$

For $t \leq T$, $S_{-t} = e^{(T-t)\partial^2} S_{-T}$, and one recognizes the integral kernel of
$e^{(T-t)\partial^2}$ at the transition density of Brownian motion (with diffusion constant 2) to
go from $B(t) = u$ to $B(T) = v$. So the strong Markov property implies
$$\E{ e^{(T-\tau)\partial^2}(2\beta \tau, v) \,|\, B(0)=u} = \pr{ \tau \leq T,\, B(T) \in dv \,|\, B(0) = u} / dv,$$
where the expression on the right is the transition density of $B$ to go from $B(0) = u$
to $B(T) = v$ while hitting the curve $h^{+}_{\beta}$. Denote this expression $P^{\rm{hit}}_T(u,v)$.
Thus,
$$\E{ S_{-\tau}(2 \beta \tau, v) \ind{\tau \leq T} \,|\, B(0) = u} = P^{\rm{hit}}_T \cdot S_{-T}(u,v).$$ 

Let $X(t) = B(t) - h^{+}_{\beta}(t)$. Then $P^{\rm{hit}}_T$ is the transition density of
$X$ to go from $X(0) = u$ to $X(T) = v - 2\beta T$ while hitting $0$. By the Cameron-Martin
Theorem, $X$ becomes Brownian motion on $[0,T]$, started from $u$ and with diffusion constant 2,
after a change of measure by the density $\exp \{ -\beta (B(T)-u) - \beta^2T\}$.
Consequently,
\begin{align*}
P^{\rm{hit}}_T(u,v) &=
\E{e^{-\beta (B(T)-u) -\beta^2T} \cdot \mathbf{1}\big \{B \;\text{hits 0 on}\; [0,T], \, B(T) \in d(v-2\beta T) \big\} \,\big |\, B(0)=u} /dv \\
& = e^{\beta(u-v)+ \beta^2 T} \, \pr{ B \; \text{hits 0 on}\; [0,T], \, B(T) \in d(v-2\beta T) \,|\, B(0) = u} / dv.
\end{align*}

Since $u > 0$, if $v - 2\beta T \leq 0$ then the latter transition density is
the transition density of $B$ to go from $B(0) = u$ to $B(T) = v - 2\beta T$.
If $v - 2\beta T > 0$, however, one \emph{reflects} along the time axis the initial
segment of $B$ till the time it hits zero. The reflection principle then implies that the latter transition
density is of $B$ to go from $B(0) = -u$ to $B(T) = v-2\beta T$. Hence, for $u > 0$,
\begin{align*}
e^{\beta(v-u)-\beta^2T} P^{\rm{hit}}_T(u,v) &= 
e^{T \partial^2}(u, v - 2\beta T) \bar{\chi}_{2\beta T}(v) + e^{T\partial^2}(-u, v-2\beta T) \chi_{2\beta T}(v) \\
& = e^{T\partial^2 + 2\beta T\partial} \cdot \bar{\chi}_{2\beta T}\,(u,v) + R \cdot e^{T\partial^2 + 2\beta T\partial} \cdot \chi_{2\beta T}\,(u,v).
\end{align*}
Relation (2) of Lemma \ref{lem:commutator} gives $e^{T(\partial + \beta)^2}M_{-\beta} = M_{-\beta}e^{T\partial^2}$.
Consequently, writing $\chi_{2\beta T}$ as $1- \bar{\chi}_{2\beta T}$ and expressing everything in operator notation, we infer
\begin{align*}
\chi_0 P^{\rm{hit}}_T &= \chi_0 M_{\beta} R e^{T(\partial+\beta)^2}M_{-\beta} +
\chi_0 M_{\beta}(I-R)e^{T(\partial+\beta)^2} \chi_{2\beta T} M_{-\beta} \\
&= \chi_0 (M_{\beta}RM_{-\beta})e^{T \partial^2} + \chi_0 M_{\beta}(I-R)M_{-\beta}\,e^{T\partial^2}\bar{\chi}_{2\beta T}.
\end{align*}
On multiplying by $S_{-T}$,
$$\chi_0 P^{\rm{hit}}_T \cdot S_{-T} = \chi_0 (M_{\beta}RM_{-\beta})S + 
\chi_0 M_{\beta}(I-R)M_{-\beta}\,e^{T\partial^2}\bar{\chi}_{2\beta T} \, S_{-T}.$$

The operators $\chi_0$ and $M_{\pm \beta}$ are diagonal, $R$ is anti-diagonal, and none depend on $T$.
The lemma thus follows if for every choice of $u$ and $v$, the quantity
$$e^{T\partial^2} \cdot \bar{\chi}_{2\beta T} \cdot S_{-T}\,(u,v) \; \to\, 0 \;\;\text{as}\;\; T \to \infty.$$
Let $(I)$ denote this quantity. Using the integral kernels of $e^{T\partial^2}$ and $S_{-T}$ one infers that $(I)$ equals
$$\int_{-\infty}^0 dz \frac{1}{\sqrt{4 \pi T}} \exp \big \{-\frac{(z+2\beta T - u)^2}{4T} - \frac{2}{3}T^3
+ T(z+2 \beta T-v) \big\} \cdot \Ai(v-z-2\beta T + T^2).$$

In order to evaluate $(I)$, write the Airy function in terms of its contour integral representation \eqref{eqn:Aifunc}
and switch the contour integration with the integration over variable $z$ by Fubini.
The integral over $z$ is a Gaussian integral, which equals
\begin{equation*}
\int_{-\infty}^0 dz \,  e^{-\frac{(z-u)^2}{4T}+z(w+T-\beta)} =
\sqrt{4 \pi T}  \, e^{ u(w+T-\beta) + T(w+T-\beta)^2} \Phi \Big(-\sqrt{2T}(w+T-\beta) - \frac{u}{\sqrt{2T}} \Big).
\end{equation*}
Here $\Phi(w) = (2 \pi)^{-1/2} \int_{-\infty}^w ds \,e^{- s^2/2}$, where $w$ is a
complex argument and the integral is over the horizontal contour $s \mapsto w + s$, for $s \leq 0$, oriented
from $-\infty$ to $w$. Substituting this into the expression $(I)$, simplifying, and changing variables $w \mapsto w-T$
shows that
$$ I = \frac{1}{2 \pi \mathbf{i}}\, \oint_{\langle+T} dw\, e^{ \frac{1}{3}w^3 -(v-u)w}\,
\Phi \Big (-\sqrt{2T}(w-\beta) - \frac{u}{\sqrt{2T}} \Big ).$$
The contour $\langle +T$ may be shifted back to $\langle$ without changing the integral. Then changing variables
$w \mapsto w + \beta$, and shifting the contour $\langle \,-\, \beta$ back to $\langle$, implies
$$ I = \frac{1}{2 \pi \mathbf{i}}\, \oint_{\langle} dw\, e^{ \frac{1}{3}(w+\beta)^3 -(v-u)(w+\beta)}\,
\Phi \Big (-\sqrt{2T}w - \frac{u}{\sqrt{2T}} \Big ).$$

If the contour is arranged such that $| \mathrm{arg}(w)| = \pi/5$ then $\Phi(-\sqrt{2T}w - u / \sqrt{2T}) \to 0$
as $T \to \infty$. This is because $\Phi(-w) \to 0$ as $w \to \infty$ within the sector
$|\mathrm{arg}(w)| \leq \pi/4 - \eps$ for any $\eps > 0$ \cite[Eq.~7.2.4]{Nist}. Moreover, if $|\mathrm{arg}(w)| \geq \pi/6 +\eps$
along the contour then the exponential factor decays in modulus to the order $\exp \{-\delta \Re(w)^3\}$ for some $\delta > 0$.
Arranging the contour as such, the dominated convergence theorem implies that $(I) \to 0$ as $T \to \infty$.
\end{proof}

We may now observe that the hitting operator $K_0$ is in fact the operator $A$ associated
to the GOE Tracy-Widom law. Employing the definition from \eqref{eqn:Khypo}, Lemma \ref{lem:Shypo},
and using the fact $S^{*}S = I$, it follows that
\begin{align*}
K_0 &= I - (S- S^{\rm{hypo}(h^{+}_0)})^{*} \chi_0 (S- S^{\rm{hypo}(h^{+}_0)}) \\
\nonumber &= S^{*} [ I - (I-R)\chi_0(I-R)] S.
\end{align*}
The relations $R \chi_0 = \bar{\chi}_0 R$ and $R^2 = I$ imply
$$I - (I-R)\chi_0(I-R) = \chi_0 R + R \chi_0 = R,$$
which establishes the claim.

Theorem \ref{thm:2} gives
\begin{align} \label{eqn:h10a}
\pr{\h(1,0; 2\beta |y|) \leq \beta^2 + a} &= \det \left ( I - K_{\beta} \right)_{L^2(\beta^2 + a,\,\infty)} \\
\nonumber &= \det \left ( I - e^{\beta^2 \partial} K_{\beta} e^{-\beta^2 \partial} \right)_{L^2(a,\,\infty)}.
\end{align}

\begin{lem}[Factorization lemma] \label{lem:factor}
The operator $e^{\beta^2 \partial} K_{\beta} e^{-\beta^2 \partial}$ admits the following factored form.
$$I - e^{\beta^2 \partial} K_{\beta} e^{-\beta^2 \partial} =
\left (I - M_{\beta}(A+ E_{\beta})M_{-\beta} \right )^{*} \left(I - M_{\beta}(A+ E_{\beta})M_{-\beta} \right),$$
where $E_{\beta} = S_{-\beta}^{*} \bar{\chi}_0 (I-R) S_{\beta}$ and $A = S^{*} R S$.
\end{lem}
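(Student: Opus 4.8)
The plan is to compute $I - e^{\beta^2 \partial} K_\beta e^{-\beta^2 \partial}$ directly from the formula \eqref{eqn:Khypo} for $I - K_\beta$, using the Reflection Lemma (Lemma \ref{lem:Shypo}) to evaluate $S^{\mathrm{hypo}(h^+_\beta)}$ and then the commutation relations of Lemma \ref{lem:commutator} to push the conjugating translations through. From \eqref{eqn:Khypo} one has $I - K_\beta = (S - S^{\mathrm{hypo}(h^+_\beta)})^* \chi_0 (S - S^{\mathrm{hypo}(h^+_\beta)})$, and by the Reflection Lemma $S - S^{\mathrm{hypo}(h^+_\beta)} = S - \chi_0 (M_\beta R M_{-\beta}) S - \bar\chi_0 S = \chi_0 (I - M_\beta R M_{-\beta}) S$. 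Therefore
$$I - K_\beta = S^* (I - M_\beta R M_{-\beta})^* \chi_0 (I - M_\beta R M_{-\beta}) S,$$
using $\chi_0^2 = \chi_0$. This already exhibits $I - K_\beta$ as $G^* G$ with $G = \chi_0 (I - M_\beta R M_{-\beta}) S$; the task is to recognize the conjugated version as the claimed product.

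The next step is to insert the conjugation by $e^{\beta^2\partial}$ and simplify. Writing $T_\lambda = e^{\lambda\partial}$ for translation, one wants $T_{\beta^2}\, G^* G\, T_{-\beta^2} = (T_{\beta^2} G^* T_{-\beta^2})(T_{\beta^2} G T_{-\beta^2})$, so it suffices to understand $T_{\beta^2} G T_{-\beta^2}$ and its adjoint, and check that the adjoint of $I - M_\beta(A+E_\beta)M_{-\beta}$ is the stated form. The key computation is: push $M_\beta R M_{-\beta}$ and $\chi_0$ around using $M_\beta R = R M_{-\beta}$ (relation (4)), and absorb the translations via relation (3), $M_{-\beta} e^{\lambda\partial} = e^{\lambda(\partial+\beta)} M_{-\beta}$, together with relation (4), $e^{\lambda\partial} S = S e^{\lambda\partial}$, and the identity $M_{\pm\beta} S = S_{\mp\beta} M_{\pm\beta}$ (which is relation (1) rewritten, since $\exp\{\frac13(\partial\mp\beta)^3\}$ conjugated by $S$'s definition gives $S_{\mp\beta}$). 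The aim is to show that the factor $I - e^{\beta^2\partial} K_\beta e^{-\beta^2\partial}$, which is of the form $\tilde G^* \tilde G$, has $\tilde G = I - M_\beta(A + E_\beta) M_{-\beta}$ after one writes $\chi_0 = I - \bar\chi_0$ and expands. Concretely, expanding $\chi_0(I - M_\beta R M_{-\beta})S = (I - \bar\chi_0)(I - M_\beta R M_{-\beta})S$ and recalling $A = S^* R S$, one should recover the term $-M_\beta A M_{-\beta}$ from $-M_\beta R M_{-\beta} S$ (conjugated appropriately back through $S^*$ on the left — i.e., after the $S^*$ already present is used to form $A$) and recover $-M_\beta E_\beta M_{-\beta}$ with $E_\beta = S_{-\beta}^* \bar\chi_0 (I-R) S_\beta$ from the $\bar\chi_0$-terms, after commuting $M_{\mp\beta}$ through $S$ to produce the $S_{\pm\beta}$'s and through $R$ to flip signs. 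The translations $e^{\pm\beta^2\partial}$ should end up exactly cancelling, either outright or by being reabsorbed into redefinitions of the Airy-type operators, which is the point of conjugating by $e^{\beta^2\partial}$ in \eqref{eqn:h10a} in the first place.

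The main obstacle I anticipate is the bookkeeping of where the conjugating translations $e^{\pm\beta^2\partial}$ go and why they disappear: one must carefully track them through the non-commuting operators $\bar\chi_0$, $R$, and $S$, using that translations commute with $S$ (relation (4)) but \emph{not} with $\bar\chi_0$ or $R$, and verify that the net effect on the $\bar\chi_0$ piece is precisely to produce the operators $S_{\pm\beta}$ flanking $\bar\chi_0(I-R)$ in $E_\beta$ — in other words, that the $\beta^2$-translation is exactly what is needed to convert the $T$-dependent cutoffs $\bar\chi_{2\beta T}$ that appeared in the proof of the Reflection Lemma into the clean $\bar\chi_0$ here, and to convert the bare $S$'s into $S_{\pm\beta}$'s. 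A secondary check is purely formal: verifying self-adjointness of the factorization, i.e.\ that $(I - M_\beta(A+E_\beta)M_{-\beta})^*$ really is obtained from $I - M_\beta(A+E_\beta)M_{-\beta}$ by the adjoint operation consistent with $A^* $ and $E_\beta^*$ — here $A = S^*RS$ is self-adjoint ($R^* = R$, $(S^*)^* = S$) but $E_\beta$ is \emph{not}, so the two factors in the product genuinely differ, and one must be careful that the asymmetry is handled correctly (the $*$ on the first factor is essential). None of these steps is deep; the work is entirely in the algebra of pushing diagonal, anti-diagonal, and Airy operators past one another, but it is error-prone and should be written out with care.
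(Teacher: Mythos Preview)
Your approach is the same as the paper's, but one step is muddled and would stall the computation as you have outlined it. You factor $I - K_\beta = G^* G$ with $G = \chi_0(I - M_\beta R M_{-\beta})S$ and then propose to compute $T_{\beta^2} G T_{-\beta^2}$ and identify it with $I - M_\beta(A+E_\beta)M_{-\beta}$. But $G$ is not of the form $I - X$: it begins with the projection $\chi_0$, and no amount of conjugation by translations will produce an identity piece. Your parenthetical about ``the $S^*$ already present'' does not fix this, since that $S^*$ lives on the far left of $G^* G$, attached to the adjoint factor, not inside the right factor $G$. What is needed---and what the paper does via the relation $\chi_0 = \chi_0 S S^* \chi_0$---is to insert $SS^* = I$ in the middle and pass to $H = S^* G = S^*\chi_0(I - M_\beta R M_{-\beta})S$, so that $G^*G = H^*H$; since $S^*\chi_0 S = I - S^*\bar\chi_0 S$, this $H$ genuinely has the form $I - (\cdot)$, and after writing $\chi_0 = I - \bar\chi_0$ one finds $H = I - S^* M_\beta R M_{-\beta} S - S^* M_\beta \bar\chi_0(I-R)M_{-\beta} S$.

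From there the paper conjugates each piece by $e^{\beta^2\partial}$, using the commutation relations to pull $M_{\pm\beta}$ to the outside; the key point (which you should make explicit) is that the $\beta^2\partial$ in the translation exactly cancels the term linear in $\partial$ coming from the expansion of $\frac{1}{3}(\partial\pm\beta)^3$, leaving $S_{\pm\beta}$ and $A = RS^2$. Finally, your remark about the $\bar\chi_{2\beta T}$ cutoffs from the Reflection Lemma proof is a red herring: those were part of the error term that already vanished in the $T\to\infty$ limit there, and the $\bar\chi_0$ in $E_\beta$ arises here simply from $\chi_0 = I - \bar\chi_0$.
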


\begin{proof}
Lemma \ref{lem:Shypo} and the relation $\chi_0 = \chi_0 S S^{*} \chi_0$ imply that
\begin{equation} \label{eqn:factor}
I - K_{\beta} = \left [ S^{*}\chi_0(S- \chi_0 M_{2\beta}RS - \bar{\chi}_0 S) \right]^{*}
\left [S^{*}\chi_0(S- \chi_0 M_{2\beta}RS - \bar{\chi}_0 S)\right ].
\end{equation}
Since $S^{*}S = I$ and $M_{\beta}$ commutes with the projection $\chi_0$, we see that
\begin{align*}
S^{*}\chi_0(S- \chi_0 M_{\beta}RM_{-\beta}S - \bar{\chi}_0 S) &= I - S^{*}\bar{\chi}_0 S - S^{*} \chi_0 M_{\beta}RM_{-\beta}S \\
&= I - S^{*}\bar{\chi}_0 S + S^{*} \bar{\chi}_0 M_{\beta} R M_{-\beta} S - S^{*} M_{\beta} R M_{-\beta} S \\
& = I - S^{*} M_{\beta} R M_{-\beta} S - S^{*}M_{\beta}\bar{\chi}_0(I - R)M_{-\beta}S\,.
\end{align*}

We now conjugate the above equation by the translation $e^{\beta^2 \partial}$ and use relations (1) and (3)
from Lemma \ref{lem:commutator} to bring $M_{\pm \beta}$ to the outside. The adjoint of relation (1)
gives $S^{*}M_{\beta} = M_{\beta} \exp \big \{ -\frac{1}{3} (\beta + \partial)^3 \big \}$. Thus, for the term
$S^{*} M_{\beta} R M_{-\beta} S$,
\begin{align*}
e^{\beta^2 \partial} S^{*} M_{\beta} R M_{-\beta} S e^{-\beta^2 \partial} &=
M_{\beta} \exp \left \{ \beta^2(\partial+\beta) - \frac{1}{3} (\beta + \partial)^3 \right \} R \, \times \\
& \quad \times \, \exp \left \{ -\beta^2(\partial+\beta) + \frac{1}{3} (\beta + \partial)^3 \right \} M_{-\beta} \\
& = M_{\beta} \exp \left \{ -\beta \partial^2 - \frac{1}{3} \partial^3 \right \} R
\exp \left \{ \beta \partial^2 + \frac{1}{3} \partial^3 \right \} M_{-\beta} \\
& = M_{\beta} R \exp \left \{ -\beta \partial^2 + \frac{1}{3} \partial^3 + \beta \partial^2 + \frac{1}{3} \partial^3\right \} M_{-\beta} \\
& = M_{\beta} A M_{\beta}.
\end{align*}
The last equation used that $A = RS^2$. A key point above is that conjugation by the translation
cancels the term involving $\partial$ in the expansion of $\pm \frac{1}{3}(\beta + \partial)^{3}$.

Analogously, one computes to see that
\begin{align*}
 e^{\beta^2 \partial} S^{*}M_{\beta}\bar{\chi}_0(I - R)M_{-\beta}S e^{-\beta^2 \partial} & = 
 M_{\beta} \exp \left \{ -\beta \partial^2 - \frac{\partial^3}{3}  \right \} \bar{\chi}_0 (I-R) \, \times \\
  & \qquad \times \, \exp \left \{ \beta \partial^2 + \frac{\partial^3}{3}  \right \} M_{-\beta} \\
 & = M_{\beta} E_{\beta} M_{-\beta}.
\end{align*}
In conclusion,
$$ e^{\beta^2 \partial} S^{*}\chi_0(S- \chi_0 M_{2\beta}RS - \bar{\chi}_0 S) e^{-\beta^2 \partial} =
I - M_{\beta} (A + E_{\beta}) M_{-\beta}.$$
The lemma follows from this relation and the expression \eqref{eqn:factor} for $I- K_{\beta}$.
\end{proof}

Lemma \ref{lem:factor} and \eqref{eqn:h10a} give
$$\pr{\h(1,0) \leq \beta^2 + a} = \det \Big( (I - M_{\beta}(A+ E_{\beta})M_{-\beta} )^{*}
(I - M_{\beta}(A+ E_{\beta})M_{-\beta})\Big)_{L^2(a,\, \infty)}\,.$$
Decompose the product above in the form $(I-X)^{*} \chi_a (I-X) + (I-X)^{*} \bar{\chi}_a (I-X)$.
The determinant of the first term over $L^2(a, \infty)$ factorizes, and upon conjugating out $M_{\beta}$ from each
factor one gets
\begin{equation} \label{eqn:h10b}
\det \Big ((I - M_{\beta}(A+ E_{\beta})M_{-\beta} )^{*} \chi_a (I - M_{\beta}(A+ E_{\beta})M_{-\beta}) \Big)_{L^2(a,\infty)} =
\det \Big ( I - A- E_{\beta}\Big)_{L^2(a,\infty)}^2.
\end{equation}

The proof of Theorem \ref{thm:1} will be completed by showing that the second term in the decomposition,
as well as the term $E_{\beta}$, provide negligible error as $\beta \to \infty$. This is the content of
the following two lemmas. The argument makes use of some standard inequalities between the
Fredholm determinant, trace norm, Hilbert-Schmidt norm and operator norm that may be found
in the book \cite{Simon}.

\begin{lem} \label{lem:error1}
As $\beta \to \infty$, $|| E_{\beta}||_{\rm{tr}} \to 0$ on $L^2(a,\infty)$. Consequently,
$$\det \big ( I - A- E_{\beta}\big)_{L^2(a,\infty)} \longrightarrow \, \det \big ( I - A \big)_{L^2(a,\infty)}.$$
\end{lem}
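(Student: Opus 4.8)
The plan is to show that $E_\beta = S_{-\beta}^{*}\bar\chi_0(I-R)S_\beta$ has trace norm tending to $0$ on $L^2(a,\infty)$, and then conclude the determinant convergence from the continuity of the Fredholm determinant in trace norm, namely $|\det(I-A-E_\beta) - \det(I-A)| \leq \|E_\beta\|_{\rm tr}\,e^{\,1+\|A\|_{\rm tr}+\|E_\beta\|_{\rm tr}}$ (using $\|A\chi_a\|_{\rm tr} < \infty$ as $A$ is the GOE kernel). The key is to factor $E_\beta$ as a product of two Hilbert–Schmidt operators with small Hilbert–Schmidt norms, since $\|E_\beta\|_{\rm tr} \leq \|S_{-\beta}^{*}\bar\chi_0\|_{\rm HS}\cdot\|(I-R)S_\beta\|_{\rm HS}$ — wait, that does not quite work because these individual pieces need not be Hilbert–Schmidt on their own and the $\beta$-dependence must be extracted. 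So instead I would split $E_\beta = S_{-\beta}^{*}\bar\chi_0 S_\beta - S_{-\beta}^{*}\bar\chi_0 R S_\beta$ and treat each summand, writing each as a product $P\cdot Q$ where $P$ maps $L^2(a,\infty)\to L^2(\mathbb R)$ and $Q$ maps $L^2(\mathbb R)\to L^2(a,\infty)$, inserting the cutoffs at $0$ and at $a$, and then estimating the two Hilbert–Schmidt norms using the explicit kernel $S_x(u,v) = e^{\frac23 x^3 + x(v-u)}\Ai(v-u+x^2)$ from \eqref{eqn:Skernel} together with the Airy decay \eqref{eqn:Airydecay}.

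The heart of the estimate is as follows. In $S_\beta(u,v)$ with $u > a$ and $v \leq 0$ (the cutoff $\bar\chi_0$ forces the intermediate variable $\leq 0$, and reflection only changes signs), one has $S_\beta(u,v) = e^{\frac23\beta^3 + \beta(v-u)}\Ai(v-u+\beta^2)$; when $v \leq 0$ and $u > a$, the argument $v - u + \beta^2 \geq \beta^2 - |v| - |a|$ is large and positive for the relevant ranges, so by \eqref{eqn:Airydecay} the Airy factor is at most $C\exp\{-\frac23(v-u+\beta^2)^{3/2}\}$, while the prefactor $e^{\frac23\beta^3}$ must be matched against it. The crucial cancellation is that $\frac23\beta^3 - \frac23(\beta^2 - |v| + \ldots)^{3/2}$ is strongly negative for large $\beta$: expanding $(\beta^2 - s)^{3/2} = \beta^3 - \tfrac32\beta s + O(s^2/\beta)$ one sees the leading $\beta^3$ cancels and there is a surplus of order $-\beta\cdot(\text{positive})$, giving exponential-in-$\beta$ decay after also absorbing $e^{\beta(v-u)} \leq e^{-\beta a}$. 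Combining this with $S_{-\beta}^{*}\bar\chi_0$, whose kernel is $S_{-\beta}(v,w) = e^{-\frac23\beta^3 - \beta(w-v)}\Ai(w-v+\beta^2)$ evaluated with $v\leq 0$, $w > a$ — here the argument $w - v + \beta^2$ is again large positive, the Airy factor gives $\exp\{-\frac23(w-v+\beta^2)^{3/2}\}$ and now the negative $e^{-\frac23\beta^3}$ works in our favour directly — one gets that both Hilbert–Schmidt norms are finite and at least one of them is $O(e^{-c\beta^{?}})$, in any case $o(1)$, yielding $\|E_\beta\|_{\rm tr}\to 0$. I would also record explicitly the rate, which by a more careful bookkeeping of the exponents is $O(\beta^{-1})$ (consistent with the remark after Theorem \ref{thm:1}), though only $o(1)$ is needed here.

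The main obstacle I anticipate is bookkeeping the competing exponential factors cleanly: the kernels $S_{\pm\beta}$ carry a large prefactor $e^{\pm\frac23\beta^3}$ and a large shift $\beta^2$ inside the Airy function, and one must verify that across the product $S_{-\beta}^{*}\bar\chi_0(I-R)S_\beta$ these conspire to a net decaying quantity uniformly over the two $L^2$ integration variables, \emph{including} the region where one or both Airy arguments are negative (where \eqref{eqn:Airydecay} only gives boundedness, not decay). To control that region I would note that $\bar\chi_0$ restricts the middle variable to $(-\infty,0]$ and the outer variables to $(a,\infty)$, so the Airy argument $\beta^2 - (\text{bounded below by } a) + (\text{the middle variable})$ can only be negative when the middle variable is very negative (of size $\gtrsim \beta^2$), and there the Gaussian-type decay coming from the $e^{\beta v}$ prefactor (for the $S_\beta$ factor; $v \le 0$, so $e^{\beta v} = e^{-\beta|v|}$ decays) plus the Airy decay in the other factor more than compensates. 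Once the pointwise kernel bound of the form $|E_\beta(u,w)| \leq C(\beta)\,e^{-c(u + w)}$ with $C(\beta) \to 0$ is established for $u,w > a$, the Hilbert–Schmidt (hence trace-norm, after one more factorization) bound on $L^2(a,\infty)$ is immediate, and the determinant statement follows from the trace-norm continuity of $\det$.
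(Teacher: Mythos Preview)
Your approach is correct and essentially identical to the paper's: split $E_\beta$ into the two summands $\chi_a S_{-\beta}^{*}\bar\chi_0 S_\beta\chi_a$ and $\chi_a S_{-\beta}^{*}\bar\chi_0 R S_\beta\chi_a$, bound each in trace norm via the factorization $\|T_1T_2\|_{\rm tr}\le\|T_1\|_{\rm HS}\|T_2\|_{\rm HS}$ through the cutoff $\bar\chi_0$, and estimate the resulting Hilbert--Schmidt integrals using the explicit kernel \eqref{eqn:Skernel} together with the Airy decay \eqref{eqn:Airydecay}. One minor correction: the trace norm of $E_\beta$ in fact decays super-exponentially (of order $e^{-\frac{4}{3}\beta^3}$), not at rate $O(\beta^{-1})$ --- the overall $\beta^{-1}$ rate alluded to after Theorem~\ref{thm:1} comes from the separate error term handled in Lemma~\ref{lem:error2}, not from $E_\beta$.
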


\begin{proof}
The representation of $E_{\beta}$ over $L^2(a,\infty)$ is $\chi_a E_{\beta} \chi_a$.
Therefore, $\chi_a E_{\beta} \chi_a = E^1_{\beta} - E^2_{\beta}$ with
$$ E^1_{\beta} = \chi_a S_{-\beta}^{*} \bar{\chi}_0 S_{\beta} \chi_a\;\; \text{and}\;\;
E^2_{\beta} =  \chi_a S_{-\beta}^{*} \bar{\chi}_0 R S_{\beta} \chi_a.$$
If suffices to show that both of the operators above have vanishingly small trace norm
on $L^2(\R)$ as $\beta \to \infty$.

Consider the operator $E^1_{\beta}$. Using the inequality $|| T_1 T_2 ||_{\rm{tr}} \leq ||T_1||_{\rm{HS}} \,||T_2||_{\rm{HS}}$
with $T_1 = \chi_a S_{-\beta}^{*} \bar{\chi}_0$ and $T_2 = \bar{\chi}_0 S_{\beta} \chi_a$ gives
$$ ||E^1_{\beta}||_{\rm{tr}} \leq || \chi_a S_{-\beta}^{*} \bar{\chi}_0 ||_{\rm{HS}}\, || \bar{\chi}_0 S_{\beta} \chi_a ||_{\rm{HS}}
= || \bar{\chi}_0 S_{-\beta} \chi_a ||_{\rm{HS}}\, || \bar{\chi}_0 S_{\beta} \chi_a ||_{\rm{HS}}.$$
Since $S_{\pm \beta}(u,v) = \exp \{\pm \frac{2}{3}\beta^3 \pm \beta(v-u)\} \Ai(v-u+\beta^2)$, one has
\begin{align*}
|| \bar{\chi}_0 S_{-\beta} \chi_a||^2_{\rm{HS}} \cdot || \bar{\chi}_0 S_{\beta} \chi_a ||^2_{\rm{HS}} = &
\int_0^{\infty} du \, \int_a^{\infty} dv \, e^{-2 \beta(v+u)} \Ai^2(v+u+\beta^2) \; \times \\
& \int_0^{\infty} du \, \int_a^{\infty} dv \, e^{2 \beta(v+u)} \Ai^2(v+u+\beta^2).
\end{align*}

We change variable $v \mapsto v+a$ in both integrals above. Then, changing variables
$y := u+v$ and $x := u-v$ in both integrals gives
$$|| \bar{\chi}_0 S_{-\beta} \chi_a||^2_{\rm{HS}} \cdot || \bar{\chi}_0 S_{\beta} \chi_a ||^2_{\rm{HS}} =
 \int_0^{\infty} dy \, y e^{-2\beta y} \Ai^2(\beta^2 + y +a) \, \int_0^{\infty} dy \, y e^{2\beta y} \Ai^2(\beta^2+y+a).$$
Rescaling the variable of the first integral as $y \mapsto y/2\beta$, and of the second as $y \mapsto \beta^2 y/2$,
shows that
\begin{align} \label{eqn:error1b}
	|| \bar{\chi}_0 S_{-\beta} \chi_a||^2_{\rm{HS}} \cdot || \bar{\chi}_0 S_{\beta} \chi_a ||^2_{\rm{HS}} = &
	\; \frac{\beta^2}{16} \int_0^{\infty}dy\, y\, e^{-y}\, \Ai^2(\beta^2 + a + (2\beta)^{-1}y) \; \times \\ \nonumber
	& \;\; \int_0^{\infty}dy\, y\, e^{\beta^3 y} \, \Ai^2 \big (\beta^2 (1+\frac{y}{2})+ a \big).
\end{align}

Recall there is a constant $C$ such that
$|\Ai(z)| \leq C \exp \{ -\frac{2}{3}z^{3/2} \}$ if $z \geq 0$ and $|\Ai(z)| \leq C$ if $z < 0$.
Since $a$ is fixed, suppose $\beta$ satisfies $\beta^2 + a \geq 1$, say.
Then due to the aforementioned bound on the Airy function the contribution
to the first of the two integrals above results from $y$ being of bounded order,
$y \approx 1$. In particular, there is a constant $C_a$ such that for sufficiently large
$\beta$ (in terms of $a$),
$$\int_0^{\infty}dy\, y\, e^{-y}\, \Ai^2(\beta^2 + a + (2\beta)^{-1}y) \leq C_a \, \Ai^2(\beta^2). $$

The magnitude of the second integral from \eqref{eqn:error1b} may also be determined from
a critical point analysis by using the bound on the Airy function above. By abusing notation a bit,
there is a constant $C_a$ such that for large enough $\beta$,
$$ e^{\beta^3 y} \, \Ai^2 \Big(\beta^2(1 + \frac{y}{2}) + a \Big) \leq C_a\,
\exp \left \{ \beta^3 \, \big (y  - \frac{4}{3} (1+ \frac{y}{2})^{3/2} \big )  \right \}.$$
The function $y - (4/3)(1+ (y/2))^{3/2}$ is uniquely maximized at $y = 0$
and its value there is $-\frac{4}{3}$. Therefore the second integral from $\eqref{eqn:error1b}$
is of order $e^{-\frac{4}{3}\beta^3}$ as $\beta \to \infty$. Consequently, there is a (new)
constant $C_a$ such that such for sufficiently large $\beta$,
$$|| \bar{\chi}_0 S_{-\beta} \chi_a||^2_{\rm{HS}} \cdot || \bar{\chi}_0 S_{\beta} \chi_a ||^2_{\rm{HS}} \leq
C_a \, \beta^2 \, \Ai^2(\beta^2) e^{-\frac{4}{3}\beta^3}.$$
This shows that $||E^1_{\beta}||_{\rm{tr}} \to 0$ as $\beta \to \infty$
since $\Ai(\beta^2)$ is of order $e^{-\frac{2}{3}\beta^3}$.

Now consider the operator $E^2_{\beta}$. Using the definitions one has that
$$E^2_{\beta}(u+a,v+a) = \ind{u\geq 0, \, v\geq 0} \, e^{\beta(v-u)}
\int_0^{\infty} dz\, e^{-2\beta z} \Ai(\beta^2 +a +u +z) \Ai(\beta^2+a +v-z).$$
The trace norm of $E^2_{\beta}$ is the same as that of $(u,v) \mapsto E^2_{\beta}(u+a,v+a)$ since
the latter is a conjugation of the former by the unitary operation of translation.
So we consider the latter kernel.

When $\beta^2 + a \geq 1$, the major contribution to the integral above comes from $z$ being in a
region around zero, $z \approx 0$, due to the rapid decay of the integrand in the variable $z$.
Consequently, for large $\beta$ there is a constant $C_a$ such that
\begin{equation} \label{eqn:error1c}
|E^2_{\beta}(u+a, v+a)| \leq C_a\, \ind{u \geq 0, \, v \geq 0} \, e^{\beta(v-u)} \Ai(u+\beta^2) \Ai(v+\beta^2).
\end{equation}
The right hand side above decays rapidly in the variable $u$, namely, it is at most of order
$e^{-\frac{2}{3} (\beta^3 + u^{3/2})}\, \ind{u \geq 0}$. Consider its rate of decay in the variable $v$.

The asymptotics of the Airy function show that for $v \geq 0$,
$$ |e^{\beta v} \Ai(v+\beta^2)| \leq \exp \left \{ \beta v - \frac{2}{3}(v+\beta^2)^{3/2} + \mathrm{const} \right \}.$$
The exponent above is uniquely maximized at $v = 0$ whereby it equals $-\frac{2}{3}\beta^3$. Moreover,
for large values of $v$ the exponent is of order $\beta v -\frac{2}{3} v^{3/2} -\beta^2 v^{1/2} + \mathrm{const}$,
which is seen from a Taylor expansion of $(1+x)^{3/2}$ around $x =0$. If $\beta \geq 1$, say,
then the quantity $\beta v -\frac{2}{3} v^{3/2} -\beta^2 v^{1/2} \leq - \frac{5}{8} v^{1/2}$
because $\beta^2 + \frac{2}{3} v - \beta v^{1/2}$ is at least $\frac{5}{8}\beta^2$, the minimum being
at $v = \frac{9}{16}\beta^2$. The upshot is that for $v \geq 0$,
$$|e^{\beta v} \Ai(v+\beta^2)| \leq \exp \left \{ - \frac{2}{3}\beta^3 -\frac{5}{8}v^{1/2} + \mathrm{const} \right \}.$$

All in all it follows that there are constants $C_a$ and $\kappa > 0$ such that
$$ |E^2_{\beta}(u+a,v+a)| \leq C_a\, \ind{u \geq 0,\, v \geq 0}\, e^{-\frac{4}{3}\beta^3 - \kappa\, (u^{3/2} + v^{1/2})}.$$
This implies that the trace norm of $E^2_{\beta}$ decays to the order $e^{-\frac{4}{3}\beta^3}$, as required.
\end{proof}

\begin{lem} \label{lem:error2}
As $\beta \to \infty$, the difference of determinants
$$\det \Big( (I - M_{\beta}(A+ E_{\beta})M_{-\beta} )^{*}
(I - M_{\beta}(A+ E_{\beta})M_{-\beta})\Big)_{L^2(a,\, \infty)} -
\det \big ( I - A- E_{\beta}\big)_{L^2(a,\infty)}^2$$
tends to zero.
\end{lem}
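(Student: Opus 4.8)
The plan is to peel an invertible factor off of $P+Q$ and bound the residual perturbation in trace norm. Write $X = M_{\beta}(A+E_{\beta})M_{-\beta}$ (so that $I-X = M_{\beta}(I-A-E_{\beta})M_{-\beta}$, using $M_{\beta}M_{-\beta}=I$), and recall the splitting $(I-X)^{*}(I-X) = (I-X)^{*}\chi_a(I-X) + (I-X)^{*}\bar\chi_a(I-X)$ introduced above. On $L^2(a,\infty)$ the two terms become $P = U^{*}U$ and $Q = N^{*}N$, where $U = \chi_a(I-X)\chi_a$ and $N = \bar\chi_a(I-X)\chi_a = -\bar\chi_a X\chi_a$. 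Since $\chi_a$ commutes with $M_{\pm\beta}$, $U = M_{\beta}WM_{-\beta}$ with $W := \chi_a(I-A-E_{\beta})\chi_a$, so conjugation invariance of the Fredholm determinant gives $\det(U)_{L^2(a,\infty)} = \det(W)_{L^2(a,\infty)} = \det(I-A-E_{\beta})_{L^2(a,\infty)}$, which tends to $\det(I-A)_{L^2(a,\infty)} = F_1(2^{2/3}a) > 0$ by Lemma \ref{lem:error1}. Hence for all large $\beta$ the operators $W$ and $U$ are invertible, $U^{-1} = M_{\beta}W^{-1}M_{-\beta}$, and $\det(P)_{L^2(a,\infty)} = \det(U)^{2} = \det(I-A-E_{\beta})^{2}_{L^2(a,\infty)}$ is exactly the second determinant in the statement; it stays bounded as $\beta\to\infty$.

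From $P+Q = U^{*}\!\big(I+U^{-*}QU^{-1}\big)U$ we obtain $\det(P+Q) = \det(P)\det\!\big(I+U^{-*}QU^{-1}\big)$. The operator $U^{-*}QU^{-1} = (NU^{-1})^{*}(NU^{-1})$ is nonnegative with trace $||NU^{-1}||_{\rm{HS}}^{2}$, so $1 \le \det(I+U^{-*}QU^{-1}) \le \exp(||NU^{-1}||_{\rm{HS}}^{2})$, and therefore
$$\big|\det(P+Q)-\det(P)\big| \;\le\; \det(P)\cdot ||NU^{-1}||_{\rm{HS}}^{2}\,\exp\big(||NU^{-1}||_{\rm{HS}}^{2}\big).$$
As $\det(P)$ is bounded, the lemma reduces to showing $||NU^{-1}||_{\rm{HS}} \to 0$ as $\beta\to\infty$.

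This is the heart of the matter, and it is precisely where the obstruction recorded in Section \ref{sec:overview} must be dodged: the crude estimate $||NU^{-1}||_{\rm{HS}} \le ||N||_{\rm{HS}}\,||U^{-1}||_{\rm{op}}$ is worthless because the conjugation by $M_{\beta}$ makes $||U^{-1}||_{\rm{op}}$ blow up, so $N$ and $U^{-1}$ must be kept together. Multiplying out, $NU^{-1} = -M_{\beta}\,\bar\chi_a(A+E_{\beta})\chi_a\,W^{-1}\,M_{-\beta}$, the central $M_{-\beta}M_{\beta}$ collapsing; thus for $u<a<v'$ its kernel equals $-e^{\beta(u-v')}\int_a^{\infty}(A+E_{\beta})(u,w)\,W^{-1}(w,v')\,dw$. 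Because $u<a<v'$ the weight $e^{2\beta(u-v')}$ is bounded by $1$ and concentrates at $u=v'=a$ as $\beta\to\infty$; the substitution $u=a-s/(2\beta)$, $v'=a+r/(2\beta)$ turns $||NU^{-1}||_{\rm{HS}}^{2}$ into $(2\beta)^{-2}\int_0^{\infty}\!\!\int_0^{\infty} e^{-(s+r)}|\Psi_{\beta}(s,r)|^{2}\,ds\,dr$ for an explicit integral $\Psi_{\beta}$ admitting a finite $\beta\to\infty$ limit, whence the quantity is $O(\beta^{-2})$ (in particular $\to 0$), and the $E_{\beta}$ terms are negligible by the estimates in the proof of Lemma \ref{lem:error1}. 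I expect the only genuine difficulty to be the dominated-convergence bookkeeping in this last step: since $\Ai(z)$ decays merely like $|z|^{-1/4}$ as $z\to-\infty$, the operator $\bar\chi_a(A+E_{\beta})\chi_a$ fails to be Hilbert--Schmidt in isolation and $\Psi_{\beta}(s,r)$ grows like $\sqrt{s+1}$ in $s$, so one must use the exponential weight — precisely the combination of the $e^{\beta u}$ ($u<a$) coming from $M_{\beta}$ and the $e^{-\beta v'}$ ($v'>a$) coming from $M_{-\beta}$ — to restore integrability after the rescaling, which is exactly the cancellation of the non-convergent conjugations that the factorization was designed to expose.
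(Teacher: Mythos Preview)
Your strategy is the paper's: split $(I-X)^{*}(I-X)$ on $L^2(a,\infty)$ into the $\chi_a$ piece (whose determinant is $\det(I-A-E_\beta)^2$) plus the $\bar\chi_a$ residual, factor out the invertible part, and show the perturbation vanishes. The difference lies only in how the residual is bounded. You estimate $\|NU^{-1}\|_{\rm HS}^2$ by direct kernel integration and rescaling, which forces you to control the pointwise kernel of $W^{-1}$ through $\Psi_\beta$; you call this ``bookkeeping'' but it is the one genuinely unfinished point, since $W^{-1}=I+(\text{trace class})$ has a $\delta$-part and the smooth remainder has no a~priori pointwise bound. The paper sidesteps this entirely: it conjugates $Y^{-1}E$ (your $P^{-1}Q$) by $M_{\pm\beta}$ and bounds
\[
\|M_{-\beta}Y^{-1}EM_\beta\|_{\rm tr}\;\le\;\|(I-\chi_a X\chi_a)^{-1}\|_{\rm op}^{2}\,\|\chi_a M_{-2\beta}\|_{\rm tr}\,\|M_\beta\bar\chi_a X\chi_a\|_{\rm op}^{2},
\]
where $X=A+E_\beta$. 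The $1/\beta$ comes cleanly from $\|\chi_a M_{-2\beta}\|_{\rm tr}=e^{-2\beta a}/(2\beta)$, the factor $\|M_\beta\bar\chi_a X\chi_a\|_{\rm op}\le C_a e^{\beta a}$ needs only the explicit Airy kernel of $A$ (not of $W^{-1}$), and the inverse enters solely through its operator norm, which is bounded. This operator-norm splitting delivers the same $O(\beta^{-1})$ rate you obtain, but with no pointwise analysis of the resolvent, and it is the natural way to complete your sketch.
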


\begin{proof}
In the following argument all Fredholm determinants are over $L^2(a,\infty)$.
Denote $X = A + E_{\beta}$. On the space $L^2(a,\infty)$,
$$(I- M_{\beta}XM_{-\beta})^{*} \bar{\chi}_a (I- M_{\beta}XM_{-\beta}) =
(M_{\beta} X M_{-\beta})^{*} \bar{\chi}_a (M_{\beta}XM_{-\beta})$$
because $\bar{\chi}_a$ annihilates the identity on $L^2(a,\infty)$.
Consequently, on $L^2(a,\infty)$,
\begin{align*} (I - M_{\beta}XM_{-\beta})^{*}(I - M_{\beta}XM_{-\beta}) = & 
\underbrace{\chi_a(I - M_{\beta}XM_{-\beta})^{*}\chi_a(I - M_{\beta}XM_{-\beta})\chi_a}_{Y} \; + \\
& + \underbrace{\chi_a(M_{\beta} X M_{-\beta})^{*} \bar{\chi}_a (M_{\beta}XM_{-\beta})\chi_a}_{E}.
\end{align*}
The determinant of $Y$ is $\det(I-X)^2$.

Since $\chi_a$ and $\bar{\chi}_{a}$ are projections and commute with $M_{\pm \beta}$,
\begin{align*}
Y  &= M_{-\beta} (\chi_a - \chi_a X^{*} \chi_a) \chi_a M_{2\beta} (\chi_a - \chi_a X \chi_a) M_{-\beta}\, ,\\
E  &= M_{-\beta} (M_{\beta}\bar{\chi}_aX\chi_a)^{*} (M_{\beta}\bar{\chi}_aX\chi_a) M_{-\beta}\,.
\end{align*}
The operators $I - X$ and $I - X^{*}$ are invertible on $L^2(a,\infty)$
for sufficiently large $\beta$ because $I-A$ is invertible there (since $\det(I-A)_{L^2(a,\infty)} = F_1(2^{2/3}a) > 0$)
and $E_{\beta}$ has vanishingly small trace norm as $\beta \to \infty$.
In fact, this means that both the operator norm and the Fredholm determinant of $I -  X$
are uniformly bounded away from 0 for sufficiently large $\beta$. This implies invertibility of
$Y$ on $L^2(a,\infty)$, and one observes from the above expressions for $Y$ and $E$ that on this space
\begin{equation} \label{eqn:error2a}
M_{-\beta} Y^{-1}E M_{\beta} = (I-\chi_aX\chi_a)^{-1} \chi_a M_{-2\beta}(I-\chi_a X^{*} \chi_a)^{-1}
(M_{\beta}\bar{\chi}_aX\chi_a)^{*} (M_{\beta}\bar{\chi}_aX\chi_a).
\end{equation}

In order to compare the determinant of $Y+E$ with that of $Y$ one first conjugates
both operators as $M_{-\beta}(Y+E) M_{\beta}$ and $M_{-\beta} E M_{\beta}$,
and then uses the inequality 
$$|\det(I-T) - 1| \leq ||T||_{\rm{tr}}\, e^{||T||_{\rm{tr}}+1}, $$
to deduce that
$$ \left | \det( Y + E) - \det(Y) \right| \leq
| \det(Y)| \, || M_{-\beta}Y^{-1} E M_{\beta}||_{\rm{tr}} \, e^{|| M_{-\beta}Y^{-1} E M_{\beta}||_{\rm{tr}}+1} .$$
The determinant of $Y$ remains bounded in $\beta$ by Lemma \ref{lem:error1}.

The trace norm of $M_{-\beta}Y^{-1} E M_{\beta}$ may be bounded using the inequalities
$|| T_1 T_2 ||_{\rm{tr}} \leq || T_1 ||_{\rm{op}}\, || T_2||_{\rm{tr}}$ and
$|| T_1 T_2 ||_{\rm{tr}} \leq || T_1 ||_{\rm{tr}}\, || T_2||_{\rm{op}}$. (The second follows from the first
by taking adjoints.) Thus,
\begin{align} \label{eqn:error2b}
|| M_{-\beta}Y^{-1} E M_{\beta}||_{\rm{tr}} &  \leq ||(I - \chi_a X \chi_a)^{-1} ||_{\rm{op}} \, \times \\ \nonumber
& \qquad \times \, ||\chi_a M_{-2\beta}\, (I-\chi_a X^{*} \chi_a)^{-1} \,
(M_{\beta}\bar{\chi}_aX\chi_a)^{*} (M_{\beta}\bar{\chi}_aX\chi_a)||_{\rm{tr}}  \\ \nonumber
& \leq ||(I - \chi_a X \chi_a)^{-1} ||_{\rm{op}} \; ||\chi_a M_{-2\beta}||_{\rm{tr}} \, \times \\ \nonumber
& \qquad \times\, ||(I-\chi_a X^{*} \chi_a)^{-1} (M_{\beta}\bar{\chi}_aX\chi_a)^{*} (M_{\beta}\bar{\chi}_aX\chi_a)||_{\rm{op}} \\ \nonumber
& \leq ||(I - \chi_a X \chi_a)^{-1} ||_{\rm{op}}^2 \, || \chi_a M_{-2\beta} ||_{\rm{tr}}\,
|| M_{\beta}\bar{\chi}_aX\chi_a||^2_{\rm{op}}.
\end{align}

The first operator norm in the last expression above remains bounded for large $\beta$ as remarked.
Since $\chi_a M_{-2\beta}$ is a diagonal operator, it has trace norm
$$||\chi_a M_{-2\beta} ||_{\rm{tr}} = \int_a^{\infty} du\, e^{-2\beta u} = \frac{e^{-2\beta a}}{2\beta}.$$
The operator norm of $M_{\beta}\bar{\chi}_aX\chi_a$ is at most
$|| M_{\beta} \bar{\chi}_aA\chi_a||_{\rm{op}} + ||M_{\beta}\bar{\chi}_a E_{\beta} \chi_a||_{\rm{op}}$. Observe that
$$M_{\beta} \bar{\chi}_aA\chi_a(u+a,v+a) = e^{\beta a} \cdot
\Big( e^{\beta u} \, 2^{-1/3} \Ai \big(2^{-1/3}(u+v+2a)\big) \ind{u <0, \, v \geq 0} \Big).$$
The operator norm of the kernel inside the big parentheses is bounded in terms of $\beta$ because
the kernel decays to the order $e^{-\frac{\sqrt{2}}{3}v^{3/2}}$ for large values of $v$ and to the
order $e^{-\beta |u|}$ for negative values of $u$. Since the operator displayed above is a conjugation of
$M_{\beta} \bar{\chi}_aA\chi_a$ by a translation, it follows that
$|| M_{\beta} \bar{\chi}_aA\chi_a||_{\rm{op}} \leq C_a\, e^{\beta a}$ for some constant $C_a$.
Similarly, $M_{\beta}\bar{\chi}_a E_{\beta} \chi_a =
e^{\beta a} e^{\partial a}\Big ( M_{\beta} \bar{\chi}_0 e^{-\partial a} E_{\beta} e^{\partial a} \chi_0 \Big) e^{-\partial a}$.
The opertor norm of what sits within the big parentheses is vanishingly small in terms
of $\beta$ by a calculation entirely analogous to that of Lemma \ref{lem:error1}. So in all,
$ || M_{\beta}\bar{\chi}_aX\chi_a||_{\rm{op}} \leq C_a\, e^{\beta a}$ for some constant $C_a$.

Therefore, \eqref{eqn:error2b} implies that for some (new) constant $C_a$,
$$|| M_{-\beta}Y^{-1} E M_{\beta}||_{\rm{tr}} \leq \frac{C_a}{\beta} .$$
Thus $|| M_{-\beta}Y^{-1} E M_{\beta}||_{\rm{tr}}$ tends to $0$ as required.
\end{proof}

Lemma \ref{lem:error1} and Lemma \ref{lem:error2} together conclude the proof of Theorem \ref{thm:1}.

This section concludes by extending Theorem \ref{thm:1} to arbitrary one-point distributions of
$\h(1,x;2\beta|y|)$, which will be utilized in the proof of Theorem \ref{thm:3}.

\begin{prop} \label{prop:1}
For every $a, x \in \R$, as $\beta \to \infty$ the probability
$$ \pr{\h(1, (2\beta)^{-1}x; 2\beta |y|) - \beta^2 \leq a} \longrightarrow F_1\big (2^{2/3}(a+x)\big) F_1\big(2^{2/3}(a-x)\big).$$
\end{prop}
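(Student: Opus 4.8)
We prove Proposition \ref{prop:1} by reducing it to the same trace-norm analysis as Theorem \ref{thm:1}; the one new ingredient is a conjugation by a heat operator, which will shift the domain of one limiting Fredholm determinant to the right and the other to the left.

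Write $x_0 = (2\beta)^{-1}x$, so that $2x_0\beta = x$. By Theorem \ref{thm:2} the one-point probability equals $\det(I - e^{-x_0\partial^2}K_{\beta}e^{x_0\partial^2})_{L^2(\beta^2+a,\,\infty)}$, and since $e^{\beta^2\partial}$ commutes with $e^{x_0\partial^2}$, conjugating by $e^{\beta^2\partial}$ as in \eqref{eqn:h10a} gives
$$\pr{\h(1,(2\beta)^{-1}x\,;\,2\beta|y|) \leq \beta^2 + a} = \det\big(I - e^{-x_0\partial^2}(e^{\beta^2\partial}K_{\beta}e^{-\beta^2\partial})e^{x_0\partial^2}\big)_{L^2(a,\,\infty)}.$$
By the Factorization Lemma \ref{lem:factor}, $e^{\beta^2\partial}K_{\beta}e^{-\beta^2\partial} = I - P^{*}P$ with $P = I - M_{\beta}(A+E_{\beta})M_{-\beta}$; inserting $e^{x_0\partial^2}e^{-x_0\partial^2} = I$ between $P^{*}$ and $P$ one obtains $I - e^{-x_0\partial^2}(I - P^{*}P)e^{x_0\partial^2} = Q_{-}Q_{+}$, where $Q_{+} = e^{-x_0\partial^2}Pe^{x_0\partial^2}$ and $Q_{-} = e^{-x_0\partial^2}P^{*}e^{x_0\partial^2}$. (These conjugates are well-defined operators for either sign of $x_0$, for the same reason that $e^{-x_0\partial^2}K_{\beta}e^{x_0\partial^2}$ is in Section \ref{sec:softkernel}: the operator $A+E_{\beta}$ and its adjoint carry the smoothing $e^{\pm\partial^3/3}$ on both sides.)

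Next I identify $Q_{\pm}$. Commuting $e^{\pm x_0\partial^2}$ past $M_{\pm\beta}$ via relation (2) of Lemma \ref{lem:commutator}, expanding $(\partial\mp\beta)^2 = \partial^2\mp 2\beta\partial+\beta^2$, and using $2x_0\beta = x$, the scalar and the $M_{\pm\beta}$ factors separate out and one is left conjugating $A+E_{\beta}$ (resp.\ $A+E_{\beta}^{*}$) by $e^{\mp x\partial}$ and by $e^{\pm x_0\partial^2}$. Two elementary facts now collapse the $A$-part completely: since $A = RS^2 = Re^{2\partial^3/3}$, it commutes with $e^{\lambda\partial^2}$ for every $\lambda$ (because $e^{\lambda\partial^2}R = Re^{\lambda\partial^2}$ and $e^{\lambda\partial^2}$ commutes with $e^{2\partial^3/3}$), and $e^{\mp x\partial}Ae^{\pm x\partial} = e^{\mp 2x\partial}A$ (because $Re^{\pm x\partial} = e^{\mp x\partial}R$). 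Hence the heat operators $e^{\pm x_0\partial^2}$ act trivially on $A$ and no $\beta$-dependence survives, giving
$$Q_{+} = I - M_{\beta}\big(e^{-2x\partial}A + \hat E_{\beta}\big)M_{-\beta}, \qquad Q_{-} = I - M_{-\beta}\big(e^{2x\partial}A + \check E_{\beta}\big)M_{\beta},$$
with $\hat E_{\beta} = e^{-x_0\partial^2}e^{-x\partial}E_{\beta}e^{x\partial}e^{x_0\partial^2}$ and $\check E_{\beta} = e^{-x_0\partial^2}e^{x\partial}E_{\beta}^{*}e^{-x\partial}e^{x_0\partial^2}$, i.e.\ conjugates of $E_{\beta}$, $E_{\beta}^{*}$ by translation and the heat semigroup (all $\beta$-independent bounded operations). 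Note $e^{\mp 2x\partial}A$ has integral kernel $2^{-1/3}\Ai(2^{-1/3}(u+v\mp 2x))$, so that $\det(I - e^{-2x\partial}A)_{L^2(a,\infty)} = \det(I-A)_{L^2(a-x,\infty)} = F_1(2^{2/3}(a-x))$ by \eqref{eqn:AGOE}, and likewise $\det(I - e^{2x\partial}A)_{L^2(a,\infty)} = F_1(2^{2/3}(a+x))$.

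It remains to run the proof of Section \ref{sec:thm1} with $P,P^{*}$ replaced by $Q_{+},Q_{-}$. On $L^2(a,\infty)$ one decomposes $Q_{-}Q_{+} = (\chi_a Q_{-}\chi_a)(\chi_a Q_{+}\chi_a) + \chi_a Q_{-}\bar{\chi}_a Q_{+}\chi_a$; the Fredholm determinant of the first term factorizes as $\det(Q_{-})_{L^2(a,\infty)}\det(Q_{+})_{L^2(a,\infty)}$, and after conjugating out $M_{\pm\beta}$ as in \eqref{eqn:h10b} these are $\det(I - e^{2x\partial}A - \check E_{\beta})_{L^2(a,\infty)}$ and $\det(I - e^{-2x\partial}A - \hat E_{\beta})_{L^2(a,\infty)}$. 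Since $\hat E_{\beta},\check E_{\beta}$ have vanishing trace norm on $L^2(a,\infty)$ as $\beta\to\infty$ — this is the critical-point computation of Lemma \ref{lem:error1}, up to the fixed translation $x$ and the vanishing heat smoothing ($x_0\to 0$) — the product of these two determinants tends to $F_1(2^{2/3}(a+x))F_1(2^{2/3}(a-x))$. Finally, the cross term $\chi_a Q_{-}\bar{\chi}_a Q_{+}\chi_a$ contributes negligibly to the full determinant by the argument of Lemma \ref{lem:error2}, the only change being the harmless fixed shift $x$ in the Airy factors, which does not affect the exponential decay in $\beta$. This proves the proposition. The main obstacle is the bookkeeping when $x<0$: then $x_0<0$ and $e^{x_0\partial^2}$ is ill-defined in isolation, so one must keep the cube smoothing $e^{\pm\partial^3/3}$ of $A+E_{\beta}$ paired with the heat operators throughout and re-verify that $\hat E_{\beta},\check E_{\beta}$ — which amount to $E_{\beta},E_{\beta}^{*}$ with the cutoff $\bar{\chi}_0$ replaced by $\bar{\chi}_{\mp x}$ and $R$ by $e^{\pm 2x\partial}R$ — still obey the bounds of Lemma \ref{lem:error1}; this is routine but must be checked.
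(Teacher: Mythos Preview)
Your proposal is correct and follows essentially the same route as the paper's own proof: conjugate the factored form of Lemma \ref{lem:factor} by the heat operator $e^{\pm x_0\partial^2}$, commute it through $M_{\pm\beta}$ via Lemma \ref{lem:commutator}(2), observe that the $A$-part reduces to a pure translation $e^{\mp 2x\partial}A$ while the residual error becomes a shifted version of $E_\beta$, and then rerun Lemmas \ref{lem:error1}--\ref{lem:error2}. The paper carries out the same manipulation with $x$ kept general and substitutes $x\to(2\beta)^{-1}x$ only at the end, arriving at the explicit error $E_{\beta,x}=S^{*}_{-\beta-x/2\beta}\bar\chi_x(I-Re^{2x\partial})S_{\beta+x/2\beta}$, which coincides with your $\hat E_\beta$; it likewise defers the trace-norm check to the reader.
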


\begin{proof}
By Theorem \ref{thm:2}, the probability
\begin{align*}
\pr{\h(1,x, 2\beta|y|) - \beta^2 \leq a} &=
\det \left ( I - e^{- x \partial^2} K_{\beta} e^{x \partial^2} \right)_{L^2(a+\beta^2,\,\infty)}\\
& = \det \left ( I - e^{- x \partial^2+\beta^2 \partial} K_{\beta} e^{x \partial^2 -\beta^2 \partial} \right)_{L^2(a,\, \infty)}.
\end{align*}
Factorization Lemma \ref{lem:factor} then gives
$$I - e^{- x \partial^2+\beta^2 \partial} K_{\beta} e^{x \partial^2 -\beta^2 \partial}  =
\left (I - e^{x \partial^2} M_{\beta} X M_{-\beta} e^{-x\partial^2} \right )^*
\left (I - e^{-x \partial^2} M_{\beta} X M_{-\beta} e^{x\partial^2} \right),$$
where $X = A + E_{\beta}$. Commutation relation (2) of Lemma \ref{lem:commutator} implies that
\begin{align*}
e^{\mp x \partial^2} \, M_{\beta} X M_{- \beta} \, e^{\pm x\partial^2}
&= M_{\beta} \, e^{\mp x (\partial+\beta)^2} X \, e^{\pm x(\partial+\beta)^2} \, M_{-\beta} \\
& = M_{\beta} \, e^{\mp x( \partial^2 + 2\beta \partial)} X \, e^{\pm x( \partial^2+ 2 \beta \partial)} \, M_{-\beta}.
\end{align*}
The relation $\partial R = - R \partial$ now implies the following identities.
\begin{align*}
e^{\mp x( \partial^2+2 \beta \partial)} \, A \, e^{\pm x(\partial^2 +2\beta \partial)} &= e^{\mp 2\beta x \partial} \, A \, e^{\pm 2\beta x \partial} ,\\
e^{\mp x( \partial^2+2 \beta \partial)} \, E_{\beta} \, e^{\pm x(\partial^2 +2\beta \partial)} & = 
S^{*}_{- \beta \, \mp x} \, e^{\mp 2\beta x \partial} \, \bar{\chi}_0 (I-R) \, e^{\pm 2\beta x \partial} \, S_{\beta \, \pm x} \\
&= S^{*}_{- \beta \, \mp x} \, \bar{\chi}_{\mp 2\beta x}(I - Re^{\pm 4\beta x \partial}) \, S_{\beta \, \pm x}.
\end{align*}

Substituting in $(2\beta)^{-1}x $ for $x$ now shows that $\pr{\h(1,(2 \beta)^{-1}x; 2\beta|y|) - \beta^2 \leq a}$ equals
$$ \det \Big ( \big(I - M_{\beta} (e^{x \partial}Ae^{-x\partial} + E_{\beta,x}) M_{-\beta} \big)^{*}
(I - M_{\beta} \big (e^{-x \partial}Ae^{x\partial} + E_{\beta,-x}) M_{-\beta} \big ) \Big )_{L^2(a,\infty)},$$
where $E_{\beta,x} = S^{*}_{-\beta - x/2\beta} \, \bar{\chi}_{x}(I-Re^{2x \partial}) \, S_{\beta + x/2\beta}$.
The proof now proceeds exactly as in the arguments of Lemma \ref{lem:error1} and Lemma \ref{lem:error2}.
The argument of Lemma \ref{lem:error2} remains the same, and in place of Lemma \ref{lem:error1} one needs
to show that the trace norm of $E_{\beta,\pm x}$ over $L^2(a,\infty)$ converges to zero as $\beta \to \infty$.
The proof of the latter is entirely analogous to the proof of Lemma \ref{lem:error1}. We do not repeat
the calculations for brevity.

In conclusion, as $\beta \to \infty$, $\pr{\h(1,(2\beta)^{-1}x; 2\beta |y|) - \beta^2 \leq a}$ tends to
\begin{align*}
& \det(I - e^{x \partial} A e^{-x\partial})_{L^2(a,\infty)} \cdot \det(I - e^{-x \partial} A e^{x\partial})_{L^2(a,\infty)} = \\
& \det(I - A )_{L^2(a+x,\infty)} \cdot \det(I - A)_{L^2(a-x,\infty)} = F_1(2^{2/3}(a+x))\cdot F_1(2^{2/3}(a-x)).
\end{align*}
\end{proof}

\subsection{Proof of Theorem \ref{thm:3}} \label{sec:thm3}

We will use an argument by way of the variational principle for the law of the process $\h(1,x; 2\beta |y|)$.
An Airy sheet $\A_2(x,y)$ is a random function of real variables $x$ and $y$ defined by the identity
$$ \A_2(x,y) = \h \big (1,x; - \infty \, \ind{z \neq y} \big) + (x-y)^2.$$
Here, $- \infty \,\ind{z \neq y}$ is the narrow wedge at $y$. The height functions above are all coupled
by a ``common noise". This noise is naturally present in TASEP and the coupled 
height functions may be obtained as a joint KPZ scaling limit of TASEPs with
different wedge initial data that all move under a common dynamic. See Section 4.5 of \cite{MQR}.

Actually, \cite{MQR} proves existence of an Airy sheet (due to tightness) but not its uniqueness
(see also \cite{P} for a similar result). Nevertheless, the following properties we use are common to every
Airy sheet: it is continuous, invariant under switching variables, and has the law of the
$\rm{Airy}_2$ process in each variable when the other is held fixed. Also, the following variational principle
applies to every Airy sheet \cite[Theorem 4.18]{MQR} (see also \cite{CQR2}).
\smallskip

\paragraph{\textbf{Variational principle}}
Let $h_0 : \R \to [-\infty, \infty)$ be a upper semicontinuous function with at most linear growth.
Then the KPZ height function $x \mapsto \h(1,x; h_0)$ (as defined in \eqref{eqn:KPZjoint}) satisfies
$$ \h(1,x; h_0) \overset{law}{=} \; \sup_{y \in \R}\, \left \{ \A_2(x,y) - (x-y)^2 + h_0(y) \right \}\,.$$
Variational formulae like these originate in \cite{J} and are similar to the Hopf-Lax-Oleinik formula
for solutions to Burgers' equation; see \cite{CQR2, S3}.

\begin{lem} \label{lem:continuity}
An Airy sheet has the following modulus of continuity uniformly over $y$ and $x_1, x_2$ with $|x_1-x_2| \leq 1$.
$$ \big | \A_2(x_1,y) - \A_2(x_2,y) \big | \leq O_p\big(|x_1-x_2|^{1/4}\big).$$
\end{lem}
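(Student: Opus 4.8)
The plan is to bootstrap the claimed modulus from the one-variable regularity of the Airy sheet together with its translation invariance, using the latter to render the location of the $y$-window irrelevant. First I would use the translation invariance $\A_2(x+t,y+t)\overset{law}{=}\A_2(x,y)$ of the Airy sheet as a two-parameter field. It implies that the law of the process $y\mapsto\A_2(x_1,y)-\A_2(x_2,y)$ depends on $(x_1,x_2)$ only through $h:=|x_1-x_2|$ and, together with the symmetry $\A_2(x,y)=\A_2(y,x)$, that this law is unchanged when the common coordinate $y$ is translated. Hence it suffices to bound, for $0<h\le 1$, the random variable $\sup_{y\in I}|\A_2(0,y)-\A_2(h,y)|$ by a tight constant times $h^{1/4}$, with a tail bound depending only on the length of $I$, not on its position. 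For the finite-dimensional statement of Theorem \ref{thm:3} this windowed version is all that is used, and the displayed statement (with the supremum over all $y$) is then recovered in the last step below.

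For the windowed estimate I would invoke the joint local H\"older continuity of the Airy sheet: on every compact $K\subset\R^2$ there is a tight random constant $Z_K$ with $|\A_2(x,y)-\A_2(x',y')|\le Z_K\,\|(x,y)-(x',y')\|^{1/2-\delta}$ for a fixed small $\delta>0$ (this follows from the continuity estimates for the KPZ fixed point in \cite{MQR}, and in particular upgrades the H\"older-$(1/2)^{-}$ regularity of the $\rm{Airy}_2$ process to a bound uniform in the frozen variable). Applied to the pair $(0,y),(h,y)$ with $y\in I$ this gives $\sup_{y\in I}|\A_2(0,y)-\A_2(h,y)|\le C_I\,Z\,h^{1/2-\delta}\le C_I\,Z\,h^{1/4}$ for $h\le 1$, which is the asserted $O_p(h^{1/4})$; the exponent $1/4$ is deliberately lossy so as to leave room for the discrepancy between $h^{1/2-\delta}$ and $h^{1/4}$. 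Using the two-variable modulus is essential here: the $\rm{Airy}_2$ marginal controls $|\A_2(0,y)-\A_2(h,y)|$ at a single $y$, and controls the $y$-oscillation of $G(y):=\A_2(0,y)-\A_2(h,y)$, but it does not by itself force that oscillation to be \emph{small} as $h\to0$, so some uniformity in $y$ is genuinely needed.

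To pass from a bounded window to all of $\R$, I would additionally use that $\A_2$ inherits from its last-passage prelimit the quadrangle (no-crossing) inequality, which makes $y\mapsto G(y)$ monotone; then $\sup_{y\in\R}|G(y)|=\max\{|G(+\infty)|,\,|G(-\infty)|\}$ with $G(\pm\infty)=\lim_{y\to\pm\infty}G(y)$. After the parabolic cancellation $G(y)=\h(1,0;-\infty\,\ind{z\ne y})-\h(1,h;-\infty\,\ind{z\ne y})+2hy-h^2$ one expects from geodesic coalescence that for $|y|$ large the quantity $\h(1,0;-\infty\,\ind{z\ne y})-\h(1,h;-\infty\,\ind{z\ne y})+2hy$ stabilizes to a last-passage increment supported on a temporal window of size of order $h^{3/2}$ (the time for transversal fluctuations to resolve the gap $h$), hence of size of order $(h^{3/2})^{1/3}=h^{1/2}$ by the $1:2:3$ scaling --- comfortably $O_p(h^{1/4})$. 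Combining this control of $G(\pm\infty)$ with the windowed estimate completes the proof.

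The main obstacle is precisely this last step: showing that the total variation of the monotone function $G$ does not accumulate is a Busemann/coalescence estimate, and it is the only place where two-dimensional information about $\A_2$ beyond its one-variable marginals and the variational principle is used. In the route that the sequel actually requires it is avoided entirely, since the translation invariance of Step~1 already carries the compact-window bound to every window.
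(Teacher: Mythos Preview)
The paper's proof is far more lightweight than yours: it simply notes that for each fixed $y$ the process $x\mapsto\A_2(x,y)$ is $\mathrm{Airy}_2$, invokes its H\"older-$(1/2-\eps)$ modulus from \cite[Theorem~4.4]{MQR}, then takes a union bound over rational $y$ and appeals to continuity of the sheet to reach all $y$. There is no two-parameter regularity, no quadrangle inequality, no coalescence argument.

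Your Step~3 is genuinely incomplete, as you yourself flag: the Busemann/coalescence sketch (``one expects\ldots'') is not a proof, and the quadrangle inequality for the Airy sheet is not established in \cite{MQR}. Your underlying worry---that controlling $|\A_2(0,y)-\A_2(h,y)|$ at each $y$ separately does not by itself produce a single $O_p$ constant uniform in $y$---is legitimate, and in fact the paper's union-bound step is terse on exactly this point (a countable union gives each $C_y<\infty$ simultaneously, not $\sup_y C_y<\infty$). What rescues both routes in the application is precisely your closing observation: the $-y^2$ term in the variational formula localizes the supremum to $|y|$ of order one, so only a compact-window modulus is actually consumed in the proof of Theorem~\ref{thm:3}. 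Your Step~2 (or the paper's argument restricted to bounded $y$) delivers that much cleanly; your final sentence about translation invariance ``carrying the compact-window bound to every window'' should be read as an equality-in-law statement feeding into that localization, not as a uniform bound over all windows at once.
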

The notation $O_p()$ means a random quantity that is finite with probability one.

\begin{proof}
For every fixed $y$, $\A_2(x,y)$ is an $\rm{Airy}_2$ process in $x$, which satisfies the
modulus of continuity estimate stated above by \cite[Theorem 4.4]{MQR}.
(The $\rm{Airy}_2$ process is H\"{o}lder-$(1/2 - \eps)$ almost surely.)
Thus, the modulus of continuity estimate above holds for every fixed $y$.
By a union bound it then holds uniformly over all rational values of $y$.
By continuity of an Airy sheet, it also holds uniformly over all $y$.
\end{proof}

Using the variational principle and separating the supremum over $y \leq 0$
from the supremum over $y \geq 0$ one has that
\begin{align*}
& \h(1,x; 2\beta|y|) \,\overset{law}{=} \,\sup_{y \in \R}\,  \A_2(x,y) - (x-y)^2 + 2\beta|y| = \max\, \{ I, II \}, \;\text{where}\\
& I = \sup_{y \leq 0}\, \A_2(x,y) - (x-y)^2 - 2\beta y \;\;\text{and}\;\; II = \sup_{y \geq 0}\, \A_2(x,y) - (x-y)^2 + 2\beta y \, .
\end{align*}
Rewrite (I) by changing variable $y \mapsto y - \beta + x$ and (II) by changing variable $y \mapsto y + \beta +x$.
Then $\h(1, (2 \beta)^{-1} x; 2\beta |y|) - \beta^2 $ has the law of $\max \left \{ X_1(x) - x, \, X_2(x) + x \right \}$ where
\begin{equation*}
X_1(x) = \sup_{y \leq \beta -\frac{x}{2\beta}}\,  \A_2 \Big(\frac{x}{2\beta}, \, y-\beta+\frac{x}{2\beta}\Big) -y^2 \, ,
\end{equation*}
and
\begin{equation*}
\nonumber X_2(x) = \sup_{y \geq -\beta -\frac{x}{2\beta}}\, \A_2 \Big(\frac{x}{2\beta}, \, y+\beta+\frac{x}{2\beta}\Big) -y^2 \,.
\end{equation*}

Now consider $X_1(x)$ for a fixed value of $x$. Since $y \mapsto \A_2(x/2\beta, y)$ has the law of the $\rm{Airy}_2$ process,
by the modulus of continuity estimate of Lemma \ref{lem:continuity} (the roles of $x$ and $y$ are now switched) one infers that
$$ \sup_{y \, \in \, \left [\beta - \frac{|x|}{2\beta}, \, \beta + \frac{|x|}{2\beta} \right]}\, \A_2\Big(\frac{x}{2\beta}, y-\beta+\frac{x}{2\beta}\Big)\,
= \, \A_2\Big(\frac{x}{2\beta}, \frac{x}{2\beta}\Big) + O_p(\beta^{-1/4}).$$
As a result, the supremum of $\A_2 \Big(\frac{x}{2\beta}, \, y-\beta+\frac{x}{2\beta}\Big) -y^2$
over $y \leq \beta -\frac{x}{2\beta}$ may be replaced by its supremum over $y \leq \beta$
with an additive error of order $o_p(1)$ as $\beta \to \infty$, since the supremum on
the leftover interval is of order $O_p(1) - \beta^2$. (The notation $o_p(1)$ denotes
a term that converges to zero in probability as $\beta \to \infty$.)

Furthermore, due to the modulus of continuity estimate in Lemma \ref{lem:continuity}, the latter
supremum may be replaced by the supremum of the process $y \mapsto \A_2 \Big(0, \, y-\beta \Big)$
over $y \leq \beta$ with an additional penalty of $o_p(1)$. This is because replacing the
$x/2\beta$ by $0$ introduces an additive error of order $O_p(\beta^{-1/4})$.
As a result,
$$X_1(x) = \sup_{y \leq \beta}\, \A_2(0, \, y-\beta) -y^2 + o_p(1)\;\; \text{as}\; \beta \to \infty.$$
This same argument implies that
$$X_2(x) = \sup_{y \geq -\beta}\, \A_2(0,y+\beta) -y^2 + o_p(1)\;\; \text{as}\; \beta \to \infty.$$
Observe that the two suprema above are $X_1(0)$ and $X_2(0)$, respectively.

Since this holds for every fixed $x$, it follows from the variational principle
that for any finite number of points $x_1, \ldots, x_m$, the joint law of the $m$-dimensional vector
$x_i \mapsto \h(1, (2\beta)^{-1} x_i, 2\beta|y|)$ satisfies
\begin{equation} \label{eqn:thm3c}
\h(1, (2\beta)^{-1} x_i; 2\beta|y|) - \beta^2 \,\overset{law}{=}\, \max\, \left \{ X_1(0) -x_i,\, X_2(0) +x_i \right \} + o_p(1)
\end{equation}
as $\beta \to \infty$.

\begin{lem} \label{lem:decorrelation}
The random variables $X_1(0)$ and $X_2(0)$ jointly converge in law to two independent GOE Tracy-Widom
random variables $2^{-2/3} X_{\rm{TW}_1}$ and $2^{-2/3} X'_{\rm{TW}_1}$, respectively, as $\beta \to \infty$.
\end{lem}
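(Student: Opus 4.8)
The plan is to obtain the joint convergence directly from Proposition~\ref{prop:1} and the distributional identity \eqref{eqn:thm3c}, with no further probabilistic input; the asymptotic decoupling of the two sides of the soft shock is already encoded in the factorization lemma and the trace-norm estimates that prove Proposition~\ref{prop:1}. The starting observation is that, applied at a single point $x$, \eqref{eqn:thm3c} says
$$\h\big(1,(2\beta)^{-1}x;2\beta|y|\big)-\beta^2 \;\overset{law}{=}\; \max\{X_1(0)-x,\,X_2(0)+x\}+o_p(1) \qquad(\beta\to\infty),$$
while trivially $\{\max\{X_1(0)-x,X_2(0)+x\}\le a\}=\{X_1(0)\le a+x,\;X_2(0)\le a-x\}$.

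First I would carry out the routine $o_p(1)$-transfer. Writing the right side above as $\max\{X_1(x)-x,X_2(x)+x\}$ plus a quantity bounded in absolute value by $|X_1(x)-X_1(0)|\vee|X_2(x)-X_2(0)|=o_p(1)$ (using that $\max$ is $1$-Lipschitz), and invoking the continuity of the GOE Tracy--Widom distribution function $F_1$, one passes the limit from Proposition~\ref{prop:1} through to conclude that for every $a,x\in\R$,
$$\lim_{\beta\to\infty}\pr{X_1(0)\le a+x,\;X_2(0)\le a-x}\;=\;F_1\big(2^{2/3}(a+x)\big)\,F_1\big(2^{2/3}(a-x)\big).$$

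The decisive step is then an elementary change of variables: as $(a,x)$ ranges over $\R^2$ so does $(u,v):=(a+x,a-x)$, so the last display is equivalent to
$$\lim_{\beta\to\infty}\pr{X_1(0)\le u,\;X_2(0)\le v}\;=\;F_1\big(2^{2/3}u\big)\,F_1\big(2^{2/3}v\big) \qquad\text{for all }u,v\in\R.$$
By \eqref{eqn:AGOE} the right-hand side is exactly the joint distribution function of two independent random variables distributed as $2^{-2/3}X_{\rm{TW}_1}$ and $2^{-2/3}X'_{\rm{TW}_1}$; since this limit is a continuous distribution function on $\R^2$, convergence of the two-dimensional distribution functions at every point is precisely the asserted joint convergence in law.

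I do not expect a genuine obstacle: the substantive content — that the left and right halves of the picture become independent — sits inside Proposition~\ref{prop:1}, and what is left is bookkeeping. The one point to be careful about is that Proposition~\ref{prop:1} must be used for \emph{all} $x$, since it is precisely the $x$-dependence of its limit that supplies the off-diagonal values of the joint distribution function; the ``diagonal'' statement $\pr{\max\{X_1(0),X_2(0)\}\le a}\to F_1(2^{2/3}a)^2$ coming from $x=0$ alone would not pin down the joint law. A secondary, purely technical point is to carry the $o_p(1)$ error in \eqref{eqn:thm3c} through the limit using an $\eta$-sandwich together with the continuity of $F_1$.
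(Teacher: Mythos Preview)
Your proposal is correct and is essentially the paper's own argument: both proofs rewrite $\{X_1(0)\le s,\,X_2(0)\le s'\}$ as $\{\max\{X_1(0)-x,\,X_2(0)+x\}\le a\}$ with $s=a+x$, $s'=a-x$, pass via the $o_p(1)$ approximation $X_i(x)=X_i(0)+o_p(1)$ to the exact variational identity $\max\{X_1(x)-x,\,X_2(x)+x\}\overset{law}{=}\h(1,(2\beta)^{-1}x;2\beta|y|)-\beta^2$, and then invoke Proposition~\ref{prop:1}. Your write-up is slightly more explicit about the $o_p(1)$ sandwich and the continuity of $F_1$, but there is no substantive difference.
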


\begin{proof}
It suffices to show that given $s, s' \in \R$, as $\beta \to \infty$,
$$ \pr{X_1(0) \leq s, X_2(0) \leq s'} \; \longrightarrow \;F_1(2^{2/3}s) \, F_1(2^{2/3}s').$$
There are numbers $a$ and $x$ such that $s = a+x$ and $s' = a-x$.
Observe the event $\{X_1(0) \leq s, X_2(0) \leq s'\}$ equals the event that $\max \, \{ X_1(0) - x, X_2(0)+x\} \leq a$.
Since $X_1(x) = X_1(0) + o_p(1)$ and $X_2(x) = X_2(0) + o_p(1)$ as $\beta \to \infty$,
it suffices to show that as $\beta \to \infty$,
$$ \pr{\max \, \{ X_1(x) - x, X_1(x) +x\} \leq a} \longrightarrow F_1 \big(2^{2/3}(a+x)\big) F_1\big(2^{2/3}(a-x)\big).$$
The law of the maximum above is $h(1,(2\beta)^{-1}x; 2\beta |y|) - \beta^2$ by the variational principle.
The required convergence is then the statement of Proposition \ref{prop:1}.
\end{proof}

Lemma \ref{lem:decorrelation} together with representation \eqref{eqn:thm3c} imply that as $\beta \to \infty$,
$$\h(1,(2\beta)^{-1}x; 2\beta|y|) - \beta^2 \; \longrightarrow \; \max \, \{ 2^{-2/3}X_{\rm{TW}_1} - x, \, 2^{-2/3} X'_{\rm{TW}_1}+x\}$$
in the sense of finite dimensional laws with respect to the variable $x$.
This completes the proof of Theorem \ref{thm:3}.


\end{document}